\documentclass[a4paper,10pt]{amsart}
\usepackage[utf8x]{inputenc}

\usepackage{amssymb}
 \usepackage{amsmath,amsfonts,amsthm,amstext}
\usepackage[all,cmtip]{xy}
\usepackage[dvips]{graphicx}
\usepackage{graphics}
\usepackage{calrsfs}
\usepackage{hyperref}
\usepackage{xcolor}
 \usepackage{breqn}

\newtheorem{theorem}{Theorem}[section]
\newtheorem{lemma}[theorem]{Lemma}
\newtheorem{prop}[theorem]{Proposition}

\newtheorem{crlr}[theorem]{Corollary}

\theoremstyle{definition}

\newtheorem{rem}[theorem]{Remark}

\newcommand{\mf}[1]{\mathfrak{#1}}
\newcommand{\mc}[1]{\mathcal{#1}}
\newcommand{\mb}[1]{\mathbb{#1}}

\title{Weak commutativity and nilpotency}
 \author{Luis Augusto de Mendon\c ca}
 \address{Department of Mathematics, University of Campinas (UNICAMP), rua S\'{e}rgio Buarque de Holanda, 651, 13083-859, Campinas-SP, Brazil.}
 \email{demendoncaluisaugusto@gmail.com}
\keywords{Lie algebras, cohomology, nilpotency, finite presentability, Gröbner-Shirshov bases}
\begin{document}

 \begin{abstract}
  We continue the analysis of the weak commutativity construction for Lie algebras. This is the Lie algebra $\chi(\mathfrak{g})$ generated by two isomorphic copies $\mathfrak{g}$ and $\mathfrak{g}^{\psi}$ of a fixed Lie algebra, subject to the relations $[x,x^{\psi}]=0$ for all $x \in \mathfrak{g}$. In this article we study the ideal $L =L(\mf{g})$ generated by $x-x^{\psi}$ for all $x \in\mf{g}$. We obtain an (infinite) presentation for $L$ as a Lie algebra, and we show that in general it cannot be reduced to a finite one. With this in hand, we study the question of nilpotency. We show that if $\mf{g}$ is nilpotent of class $c$, then $\chi(\mf{g})$ is nilpotent of class at most $c+2$, and this bound can improved to $c+1$ if $\mf{g}$ is $2$-generated or if $c$ is odd. We also obtain concrete descriptions of $L(\mf{g})$ (and thus of $\chi(\mf{g})$) if $\mf{g}$ is free nilpotent of class $2$ or $3$. Finally, using methods of Gröbner-Shirshov bases we show that the abelian ideal $R(\mf{g}) = [\mf{g}, [L, \mf{g}^{\psi}]]$ is infinite-dimensional if $\mf{g}$ is free of rank at least $3$. 
 \end{abstract}


\maketitle

\begin{section}{Introduction}
Given a Lie algebra $\mf{g}$ over a field $K$, with $char(K)\neq2$, the weak commutativity construction 
is defined as
\[ \chi(\mf{g}) = \langle \mf{g}, \mf{g}^{\psi} |\ \ [x,x^{\psi}]= 0 \hbox{ for all } x \in \mf{g} \rangle,\]
where $\mf{g}^{\psi}$ is an isomorphic copy of $\mf{g}$ and the isomorphism is written as $x \mapsto x^{\psi}$.
This is the Lie algebra version of a group-theoretic construction defined by Sidki \cite{Sid} and studied subsequently 
by many authors \cite{BLN}, \cite{BriKoc},  \cite{GupRocSid}, \cite{KocSid}, \cite{LimOli},  \cite{Roc}.

The study of the Lie algebra construction was started by the author in \cite{Men}. There it is shown that most of the group-theoretic results from the articles above admit an analogue for Lie algebras, though sometimes the proofs are completely different. For instance, it is shown that
if $\mf{g}$ has one of the following properties, then so does $\chi(\mf{g})$:
\begin{enumerate}
 \item $\mf{g}$ is finite-dimensional;
 \item $\mf{g}$ is finitely presentable;
 \item $\mf{g}$ is solvable and of homological type $FP_{\infty}$.
\end{enumerate}
On the other hand, $\chi(-)$ does not preserve in general the $FP_m$ properties, as we can see by considering free Lie algebras of finite rank.

Most of the proofs of these results involve the analysis of some special ideals of $\chi(\mf{g})$. We can define them by nice homomorphisms. Let $\alpha: \chi(\mf{g}) \to \mf{g}$ be defined by
\[ \alpha(x) = \alpha(x^{\psi}) = x,\]
for all $x \in \mf{g}$. Similarly, define $\beta: \chi(\mf{g}) \to \mf{g} \oplus \mf{g}$ by
\[\beta(x)= (x,0), \  \ \beta(x^{\psi}) = (0,x)\]
for all $x \in \mf{g}$. Finally, $\rho: \chi(\mf{g}) \to \mf{g} \oplus \mf{g} \oplus \mf{g}$ is defined by:
\[\rho(x)= (x,x,0), \  \ \rho(x^{\psi}) = (0,x,x)\]
for all $x \in \mf{g}$. We then define:
\[ L(\mf{g}) := ker(\alpha), \  \ D(\mf{g}) := ker(\beta), \  \ W(\mf{g}) := ker(\rho).\]
When there is no risk of ambiguity we write simply $L$, $D$ and $W$.

One can show that $[L,D]=0$ for any Lie algebra $\mf{g}$. It follows then that $W$ is an \textit{abelian ideal} of $\chi(\mf{g})$, since clearly $W = L \cap D$. It is not completely clear though when $W$ is finite-dimensional. Theorem 1.3 in 
\cite{Men} shows that this is the case if $\mf{g}$ is finitely presented (or, more generally, of homological type $FP_2$) and $\mf{g}'/\mf{g}''$ is finite-dimensional, but the converse is not true: if $\mf{g}$ is free of rank $2$, then $W=0$ (by Theorem 1.6 and Proposition 7.3 in \cite{Men}), but $\mf{g'}/\mf{g''}$ is of infinite dimension. 

In this article we focus on $L(\mf{g})$. We start by describing a finite generating set for it as a Lie algebra, for any finitely generated Lie algebra $\mf{g}$. We write then a presentation for $L(\mf{g})$ in terms of these generators. This is Theorem \ref{Thm1} and the remarks following it. We do not state it here completely because that would require introducing a lot of notation. The set of generators is
\[\{x-x^{\psi}, [x,y]-[x,y]^{\psi}\}_{x,y},\]
where $x$ and $y$ run through a generating set for $\mf{g}$, and 
 defining relations come from some manipulation of the identity
\begin{equation}  \label{fundiden}
[x-x^{\psi}, [y-y^{\psi}, z-z^{\psi}]] = [x,[y,z]] - [x^{\psi},[y^{\psi},z^{\psi}]],
\end{equation}
which holds for all $x,y,z \in \mf{g}$. 

Even if $\mf{g}$ is finitely presented, our presentation will be in general infinite. This is expected: $L(\mf{g})$ is not finitely presented if $\mf{g}$ is free of rank at least $2$ (see Proposition \ref{H2Linfinite}).

What makes this presentation interesting is that it helps when performing computations on $L(\mf{g})$, and since 
$\chi(\mf{g}) \simeq L \rtimes \mf{g}$, this allows us to obtain results on the structure of $\chi(\mf{g})$. We used this to study the question of nilpotency.

\begin{theorem} \label{Thm2}
Suppose that $\mf{g}$ is nilpotent of class $c$. 
\begin{enumerate}
 \item If $c$ is odd, then $\chi(\mf{g})$ is nilpotent of class at most $c+1$.
 \item If $c$ is even, then $\chi(\mf{g})$ is nilpotent of class at most $c+2$.
 \end{enumerate}
\end{theorem}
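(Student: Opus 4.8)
The plan is to use the semidirect decomposition $\chi(\mf g)=L\rtimes\mf g$ together with the homomorphisms $\alpha,\beta,\rho$ to localize $\gamma_{c+1}(\chi(\mf g))$, and then to push the analysis one or two commutators further by means of the fundamental identity \eqref{fundiden}. First: the image $\rho(\chi(\mf g))$ is a subalgebra of $\mf g\oplus\mf g\oplus\mf g$, which has nilpotency class $\le c$, so $\gamma_{c+1}(\chi(\mf g))\subseteq\ker\rho=W$. Since $W=L\cap D$ and $[L,D]=0$, the ideal $W$ is abelian and centralized by $L$; as $\chi(\mf g)=L+\mf g$, an immediate induction gives $\gamma_{c+1+k}(\chi(\mf g))\subseteq W$ and
\[
\gamma_{c+1+k}(\chi(\mf g))=\big[\,\gamma_{c+1}(\chi(\mf g)),\underbrace{\mf g,\dots,\mf g}_{k}\,\big]
\]
for every $k\ge 0$. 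Thus the theorem reduces to two statements: $[\gamma_{c+1}(\chi(\mf g)),\mf g]=0$ when $c$ is odd, and $[\gamma_{c+1}(\chi(\mf g)),\mf g,\mf g]=0$ for arbitrary $c$.

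Next I would obtain a usable normal form for the relevant commutators. Write $w(x,y):=[x,y^{\psi}]$; polarizing $[x,x^{\psi}]=0$ gives $w(x,y)=[x^{\psi},y]=-w(y,x)$, and one checks $w(x,u)\in W$ whenever $u\in\gamma_{c}(\mf g)$ (indeed $\alpha(w(x,u))=[x,u]=0$ and $\beta(w(x,u))=0$). Using \eqref{fundiden} and the presentation of $L$ from Theorem \ref{Thm1} to rewrite iterated brackets of the generators $x-x^{\psi}$ and $[x,y]-[x,y]^{\psi}$, one reduces $\gamma_{c+1}(\chi(\mf g))$, and hence the commutators above, to combinations of the elements $w(x,u)$ with $u\in\gamma_c(\mf g)$. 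The computational heart is a telescoping phenomenon: expanding $[w(x,u),a]$ by the Jacobi identity and discarding every summand that contains a factor from $[u,\mf g]\subseteq\gamma_{c+1}(\mf g)=0$, one is left with an identity of the shape $[w(x,u),a]=\tfrac12\,w([x,a],u)$ — note that $\mathrm{char}(K)\neq2$ enters here — and a second application, now also discarding summands containing factors from $[u,\gamma_2(\mf g)]\subseteq\gamma_{c+2}(\mf g)=0$, handles $[\gamma_{c+1}(\chi(\mf g)),\mf g,\mf g]$.

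The parity of $c$ is extracted from the flip automorphism $\sigma$ of $\chi(\mf g)$ with $\sigma(x)=x^{\psi}$: it interchanges $\mf g$ and $\mf g^{\psi}$, fixes $M=[\mf g,\mf g^{\psi}]$ pointwise, preserves $W$, and acts on each quotient $\gamma_{k}(L)/\gamma_{k+1}(L)$ as multiplication by $(-1)^{k}$. Feeding the telescoping identities into this action forces the multilinear expression governing $[\gamma_{c+1}(\chi(\mf g)),\mf g]$ to be simultaneously symmetric and antisymmetric in a pair of its arguments when $c$ is odd, hence (since $\mathrm{char}(K)\neq2$) to vanish; this yields class $\le c+1$. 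For even $c$ the sign collision occurs one step later, at the level of $[\gamma_{c+1}(\chi(\mf g)),\mf g,\mf g]$ — where the extra vanishing $[u,\gamma_2(\mf g)]=0$ is also in force — giving class $\le c+2$.

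The step I expect to be the main obstacle is the reduction in the second paragraph: showing, via the presentation of $L$, that iterated brackets of the generators of $L$ really do reduce to combinations of the $w(x,u)$ with $u\in\gamma_c(\mf g)$ modulo terms that the induction absorbs, so that the telescoping identities apply cleanly. Everything after that is a matter of organizing the Jacobi expansions and keeping track of signs under $\sigma$, but getting that normal form right — and verifying that the deeper terms produced along the way are genuinely absorbed — is exactly where Theorem \ref{Thm1} does the heavy lifting.
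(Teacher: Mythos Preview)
Your opening is sound ($\gamma_{c+1}(\chi(\mf g))\subseteq W$ via $\rho$; then $[L,W]=0$ reduces further brackets to the $\mf g$-action), and the telescoping identity $[w(x,u),a]=\tfrac12\, w([x,a],u)$ for $u\in\gamma_c(\mf g)$ is indeed correct. But even granting the reduction of $\gamma_{c+1}(\chi(\mf g))$ to combinations of $w(x,u)$, iterating the telescoping gives only $[\,\ldots[[w(x,u),a_1],a_2],\ldots,a_k]=2^{-k}\,w([\ldots[[x,a_1],a_2],\ldots,a_k],u)$, which vanishes once the first argument reaches $\gamma_{c+1}(\mf g)$, i.e.\ after $c$ further commutators. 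That yields class $\le 2c$, not $c+1$ or $c+2$. Your $\sigma$-argument does not close this gap: every $w(x,u)$ is $\sigma$-fixed (since $\sigma([x,u^{\psi}])=[x^{\psi},u]=[x,u^{\psi}]$), and so is each of its iterated brackets with $\mf g$ (because $[w,a]=[w,a^{\psi}]$ for $w\in W$). Thus no symmetry/antisymmetry collision ever arises among these elements; the observation that $\sigma$ acts as $(-1)^k$ on $\gamma_k(L)/\gamma_{k+1}(L)$ only says $\sigma$-fixed elements sit in even layers of that filtration, which by itself forces no vanishing.

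In the paper the parity enters through a different door, and it is not detected by $\sigma$. The defining relations \eqref{Lrel2.1} and \eqref{Lrel2.2} of $\mc L$ are available only for brackets of \emph{odd} length in the generators $a_i,b_{i,j}$ (this is forced by \eqref{fundiden} being a three-fold identity). After transporting $\gamma_{n+1}(\chi(\mf g))$ into $\mc L$ via $\theta$, one shows it consists of elements of degree $n+1$ for the grading $d(a_i)=1$, $d(b_{i,j})=2$. When $n+1$ is odd --- i.e.\ for $n=c+1$ with $c$ odd, or $n=c+2$ with $c$ even --- every such element can be rewritten via those relations as a combination of $[a_{i_1},\ldots,a_{i_{n+1}}]$ and $[a_{i_1},\ldots,a_{i_{n-1}},b_{i_n,i_{n+1}}]$, each of which is $\mu(u)$ for some $u\in\gamma_{\ge c+1}(\mf g)$ and hence vanishes in the nilpotent quotient. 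When $n+1$ is even one cannot eliminate brackets with a $b$ at both ends, which is exactly why even $c$ costs the extra step. So the missing idea is not a sign trick but this odd-length rewriting inside $\mc L$.
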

This should be compared with the article of Gupta, Rocco and Sidki \cite{GupRocSid}, where the nilpotency of the group-theoretic construction is studied. There it is shown that if a group $G$ is nilpotent of class $c$, then $\chi(G)$ is nilpotent of class at most $max\{ c+2, d(G)\}$, where $d(G)$ is the minimal number of generators of $G$.

Another consequence is that we were able to obtain a concrete description of $\chi(\mf{n}_{m,c})$, where $\mf{n}_{m,c}$ is a free nilpotent Lie algebra of class $c$ and rank $m$, for $c =2$ or $3$. 

\begin{crlr}  \label{Crlr1}
If $\mf{h} =  \mf{n}_{m,2}$, then $L(\mf{h})$ is free nilpotent of rank $m+ \binom{m}{2}$ and class $2$. In particular, we have
\[dim \chi(\mf{h}) = 2k+ \binom{k}{2},\]
where $k= m + \binom{m}{2}$, and $\chi(\mf{h})$ is nilpotent of class exactly $4$ if $m \geq 3$.
\end{crlr}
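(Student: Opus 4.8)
The plan is to identify $L=L(\mf h)$ with $\mf n_{k,2}$ by a graded dimension count, and then read off the rest. Because $\mf h$ is nilpotent of class $2$, the right-hand side of the fundamental identity (\ref{fundiden}) vanishes identically, i.e.\ every triple bracket of elements $g-g^{\psi}$ with $g\in\mf h$ is zero in $\chi(\mf h)$. By Theorem \ref{Thm1}, $L$ is generated by the $k=m+\binom m2$ elements $u_i:=x_i-x_i^{\psi}$ and $v_{ij}:=[x_i,x_j]-[x_i,x_j]^{\psi}$, all of the form $g-g^{\psi}$; hence the free Lie algebra on these $k$ symbols, modulo ``all triple brackets vanish'', is already nilpotent of class $\le 2$ and surjects onto $L$. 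Thus $L$ is nilpotent of class $\le 2$ and there is a surjection $\pi\colon\mf n_{k,2}\twoheadrightarrow L$, so $\dim L\le k+\binom k2$; the point is to prove equality.

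I would exploit the grading of $\chi(\mf h)$ by bracket length, available because $\chi(\mf h)$ has a finite presentation with length-homogeneous relations: the triple-bracket relations of $\mf n_{m,2}$ on the $x_i$ and on the $x_i^{\psi}$, together with the polarised weak-commutativity relations obtained by expanding $[z,z^{\psi}]=0$ over a basis of $\mf h$. By Theorem \ref{Thm2}(2), $\chi(\mf h)=\bigoplus_{s=1}^4\chi_{(s)}$ with $\gamma_s(\chi(\mf h))=\bigoplus_{i\ge s}\chi_{(i)}$, and $L=\ker\alpha$ is a graded ideal. Since $\mf h$ lives in degrees $1$ and $2$ only, $\alpha$ kills $\chi_{(3)}$ and $\chi_{(4)}$, so $L_{(s)}=\chi_{(s)}$ for $s\ge 3$; a direct computation with $\alpha$ in the low degrees (using only the degree-$\le 2$ relations) gives $\dim L_{(1)}=m$, spanned by the $u_i$, and $\dim L_{(2)}=2\binom m2$, with the $\binom m2$ brackets $[u_i,u_j]$ and the $\binom m2$ elements $v_{ij}$ together a basis. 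On the other hand, as $L$ is $2$-step nilpotent and generated by the $u_i$ (degree $1$) and $v_{ij}$ (degree $2$), $L_{(3)}$ is spanned by the $m\binom m2$ brackets $[u_i,v_{jl}]$ and $L_{(4)}$ by the $\binom{\binom m2}{2}$ brackets $[v_{ij},v_{lp}]$; expanding and using (\ref{fundiden}) and the polarised relations gives
\[
[u_i,v_{jl}]=-2\,[x_i^{\psi},[x_j,x_l]],\qquad [v_{ij},v_{lp}]=-2\,[[x_i,x_j],[x_l^{\psi},x_p^{\psi}]].
\]

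Combining $\dim L=m+2\binom m2+\dim\chi_{(3)}+\dim\chi_{(4)}$ with $\dim\chi_{(3)}\le m\binom m2$ and $\dim\chi_{(4)}\le\binom{\binom m2}{2}$, the equality $\dim L=k+\binom k2$ becomes equivalent to two linear-independence statements in $\chi(\mf h)$: the $m\binom m2$ elements $[x_i^{\psi},[x_j,x_l]]$ span $\chi_{(3)}$ freely, and the $\binom{\binom m2}{2}$ elements $[[x_i,x_j],[x_l^{\psi},x_p^{\psi}]]$ span $\chi_{(4)}$ freely. I would prove these by computing a Gröbner–Shirshov basis of the presentation above for a suitable monomial order and listing the reduced monomials of lengths $3$ and $4$. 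The hard step will be degree $4$: one must check that the relations coming from $[[x_i,x_j],[x_i,x_j]^{\psi}]=0$ and its polarisations do not combine with the Jacobi consequences of the length-$3$ relations to force any extra collapse — this is the same sort of computation that, pushed to $\mf g$ free of rank $\ge 3$, makes $R(\mf g)$ infinite-dimensional in the final section, and here it terminates.

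Granting the two independences, $\pi$ is an isomorphism, so $L(\mf h)\cong\mf n_{k,2}$; since $\chi(\mf h)\cong L(\mf h)\rtimes\mf h$ this yields $\dim\chi(\mf h)=\bigl(k+\binom k2\bigr)+k=2k+\binom k2$. Finally, for $m\ge 3$ the elements $v_{12}$ and $v_{13}$ are the images of two distinct free generators of $\mf n_{k,2}\cong L(\mf h)$, so $[v_{12},v_{13}]\neq 0$; but $v_{12},v_{13}\in\gamma_2(\chi(\mf h))$, hence $[v_{12},v_{13}]\in\gamma_4(\chi(\mf h))$, and together with Theorem \ref{Thm2}(2) this shows $\chi(\mf h)$ is nilpotent of class exactly $4$.
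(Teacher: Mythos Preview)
Your upper bound is clean: the fundamental identity \eqref{fundiden} does vanish identically when $\mf h$ has class $2$, and this immediately forces $L(\mf h)$ to be $2$-step nilpotent on the $k$ generators $u_i,v_{ij}$, giving the surjection $\mf n_{k,2}\twoheadrightarrow L(\mf h)$. The identities $[u_i,v_{jl}]=-2[x_i^\psi,[x_j,x_l]]$ and $[v_{ij},v_{lp}]=-2[[x_i,x_j],[x_l^\psi,x_p^\psi]]$ are correct, and your argument for class exactly $4$ via $[v_{12},v_{13}]\neq 0$ is essentially the paper's.

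The gap is in the lower bound. You reduce injectivity of $\pi$ to two linear-independence claims in $\chi(\mf h)$ and propose to verify them by ``computing a Gr\"obner--Shirshov basis of the presentation above for a suitable monomial order'', but you do not carry this out; you yourself flag the degree-$4$ step as ``hard''. This is the entire content of the corollary, and it is not obviously routine: in degree $4$ you must control the interaction between the polarized relations $[[x_i,x_j],[x_k,x_l]^\psi]=[[x_i,x_j]^\psi,[x_k,x_l]]$, the degree-$3$ relations $[x_i,[x_j,x_k]^\psi]=[x_i^\psi,[x_j,x_k]]$, and the nilpotency relations on each copy of $\mf h$. Without exhibiting the completion, the argument is a plan rather than a proof.

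The paper bypasses this by working on the $L$-side rather than the $\chi$-side. Theorem~\ref{Thm1} gives a presentation of $L(\mf g)$ for $\mf g$ free, with generators $a_i,b_{i,j}$ and relations \eqref{Lrel1}--\eqref{Lrel2.2}; passing to $\mf h=\mf n_{m,2}$ adds only the relators $\mu(u)$ for $u$ of length $\ge 3$, namely $[a_i,a_j,a_k]=0$ and $[a_i,a_j,b_{k,l}]=0$. One then checks in two lines (via \eqref{Lrel2.1}) that these force all triple brackets among the $a_i,b_{i,j}$ to vanish and simultaneously trivialize every relator \eqref{Lrel1}--\eqref{Lrel2.2}. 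Hence the presentation of $L(\mf h)$ collapses to exactly that of $\mf n_{k,2}$, with no separate independence verification needed. The Gr\"obner--Shirshov work you propose to do inside $\chi(\mf h)$ has, in effect, already been absorbed into the proof of Theorem~\ref{Thm1}.
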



\begin{crlr}  \label{Crlr2}
If $\mf{h} =  \mf{n}_{m,3}$, then $L(\mf{h})$ is a central extension of $K^{m \binom{m}{2}}$ by 
$\mf{n}_{m,4} \oplus \mf{n}_{\binom{m}{2}, 2}$. In particular, $\chi(\mf{h})$ is nilpotent of class exactly $4$.
\end{crlr}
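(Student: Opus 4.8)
The plan is to combine the presentation of $L:=L(\mf h)$ from Theorem~\ref{Thm1} with the relation $[L,D]=0$ to determine $L$ up to isomorphism, and then read off the nilpotency class of $\chi(\mf h)\simeq L\rtimes\mf h$ using Theorem~\ref{Thm2}. Fix free generators $x_1,\dots,x_m$ of $\mf h=\mf n_{m,3}$ and set
\[
u_i:=x_i-x_i^{\psi}\quad(1\le i\le m),\qquad v_{ij}:=[x_i,x_j]-[x_i,x_j]^{\psi}\quad(1\le i<j\le m),
\]
with $v_{ii}=0$ and $v_{ji}=-v_{ij}$; by Theorem~\ref{Thm1} these generate $L$ as a Lie algebra. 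Two facts come for free. Since $\beta(v_{ij})=([x_i,x_j],-[x_i,x_j])$ and $\mf h''=0$, every bracket $[v_{ij},v_{kl}]$ lies in $D=\ker\beta$, so $[L,[v_{ij},v_{kl}]]\subseteq[L,D]=0$; thus $[\langle v_{ij}\rangle,\langle v_{ij}\rangle]$ is central in $L$ and $\langle v_{ij}\rangle$ is nilpotent of class at most $2$, hence a quotient of $\mf n_{\binom{m}{2},2}$. On the other hand $\langle u_1,\dots,u_m\rangle$ is generated by $m$ elements and, lying inside $\chi(\mf h)$, has class at most $4$ by Theorem~\ref{Thm2}(1), hence is a quotient of $\mf n_{m,4}$.

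The crucial step --- and the one I expect to be the main obstacle --- is to show, using the defining relations of $L$ from Theorem~\ref{Thm1} (these being the consequences of \eqref{fundiden} together with the class $3$ relations $[x_a,[x_b,[x_c,x_d]]]=0$ of $\mf h$), that each element $w_{i,jk}:=[u_i,v_{jk}]$ is central in $L$. Granting this, the Jacobi identity makes every iterated bracket of length at least $3$ that involves both some $u_r$ and some $v_{st}$ vanish. Combining with the two facts above we get
\[
L=\langle u_1,\dots,u_m\rangle+\langle v_{ij}\rangle+\textstyle\sum_{i,\,j<k}K\,w_{i,jk},
\]
and therefore $\dim L\le\dim\mf n_{m,4}+\dim\mf n_{\binom{m}{2},2}+m\binom{m}{2}$.

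To see this bound is sharp I would construct the target directly. Let $V:=\mf n_{m,4}\oplus\mf n_{\binom{m}{2},2}\oplus\mf z$ as a vector space, with $\mf z=\bigoplus_{i,\,j<k}K\,e_{i,jk}$, equipped with the bracket that restricts to the given one on each of the first two summands, is zero on $\mf z$ and between $\mf z$ and the rest, and satisfies $[\bar u_i,\bar v_{jk}]=e_{i,jk}$ on generators, extended by the convention that it vanishes as soon as one argument lies in the derived subalgebra of its own factor. Since this cocycle is bilinear and supported in the lowest degrees, the Jacobi obstruction only yields brackets that are already zero, so $V$ is a Lie algebra; by construction it is precisely the central extension of $K^{m\binom{m}{2}}$ by $\mf n_{m,4}\oplus\mf n_{\binom{m}{2},2}$ of the statement, with $\dim V=\dim\mf n_{m,4}+\dim\mf n_{\binom{m}{2},2}+m\binom{m}{2}$. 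Mapping $u_i\mapsto(\bar u_i,0,0)$ and $v_{ij}\mapsto(0,\bar v_{ij},0)$ and checking (once more using Theorem~\ref{Thm1}) that the relations of $L$ are satisfied in $V$ produces a surjection $L\twoheadrightarrow V$; it forces the $w_{i,jk}$ to be linearly independent and shows $\langle u_i\rangle\twoheadrightarrow\mf n_{m,4}$ and $\langle v_{ij}\rangle\twoheadrightarrow\mf n_{\binom{m}{2},2}$, so --- these subalgebras being also quotients of $\mf n_{m,4}$ and $\mf n_{\binom{m}{2},2}$ --- they are isomorphic to them. Comparing dimensions now gives $\dim L=\dim V$, whence $L\cong V$; in particular the displayed sum is direct, so $L(\mf h)$ is the asserted central extension.

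Finally, $\chi(\mf h)$ has class at most $4$ by Theorem~\ref{Thm2}(1) and contains the copy of $\mf n_{m,4}$ just found inside $L$, which has class exactly $4$ as soon as $m\ge 2$; hence $\chi(\mf h)$ is nilpotent of class exactly $4$. The only genuinely delicate point is the centrality of the $w_{i,jk}=[u_i,v_{jk}]$: this is where one must isolate the right consequence of \eqref{fundiden} inside the infinite presentation of Theorem~\ref{Thm1}, and it is what simultaneously controls the dimension $m\binom{m}{2}$ of the central kernel and the fact that the quotient splits as a direct sum. Everything else --- verifying the relations of $L$ in $V$, and checking that they force no further collapse among the length $\le 4$ brackets of the $u_i$ or the length $2$ brackets of the $v_{ij}$ --- is routine once the $w_{i,jk}$ are known to be central, because those families of relations become decoupled.
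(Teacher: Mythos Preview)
Your outline is essentially the paper's argument, repackaged. The paper works inside the presentation $\mc L$: from $\mu([x_{i_1},\ldots,x_{i_4}])=[a_{i_1},a_{i_2},b_{i_3,i_4}]=0$ and $\mu([x_{i_1},\ldots,x_{i_5}])=[a_{i_1},\ldots,a_{i_5}]=0$, together with relation \eqref{Lrel2.1}, it reads off exactly the three facts you isolate---centrality of $[a_i,b_{j,k}]$, $\langle a_i\rangle$ nilpotent of class $\le 4$, $\langle b_{i,j}\rangle$ nilpotent of class $\le 2$---and then asserts that no further relations survive. Your use of $[L,D]=0$ to get centrality of $[v_{ij},v_{kl}]$ is a pleasant shortcut the paper does not take, and your external construction of $V$ with a dimension count is a tidier way to package the ``no extra relations'' claim, but the substance is the same.

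Two remarks. First, the step you flag as ``the main obstacle'' is in fact immediate from \eqref{fundiden}, which holds for \emph{arbitrary} elements of $\mf h$: with $x=x_\ell$, $y=x_i$, $z=[x_j,x_k]$ one gets $[u_\ell,[u_i,v_{jk}]]=[x_\ell,x_i,[x_j,x_k]]-[x_\ell,x_i,[x_j,x_k]]^\psi=0$ because $\gamma_4(\mf h)=0$, and similarly $[v_{st},[u_i,v_{jk}]]=0$; this is precisely what the paper's $\mu$-computation encodes. Second, your surjection $L\twoheadrightarrow V$ needs slightly more care than ``checking the relations of Theorem~\ref{Thm1}'': by the discussion after that theorem, $L(\mf h)=\mc L/J$ where $J$ is the ideal of $\chi(\mf g)$ (not just of $\mc L$) generated by $\mu(\gamma_4(\mf g))$, so it is closed under the $\mf g$-action of Lemma~\ref{actionbrac} as well. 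The paper is equally informal at this point (``clearly no relations of smaller degree involving the $a_i$ can exist''); one clean way to close the gap for either argument is to note that all defining relations are homogeneous for the weighted degree and for the parity of the number of $b$'s, and that none of them touch the pure-$a$ monomials of degree $\le 4$, the $b_{i,j}$ and $[b_{i,j},b_{k,l}]$, or the $[a_i,b_{j,k}]$.
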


The dimension of $\chi(\mf{n}_{m,3})$ can be computed directly by the proposition above together with Witt's dimension formula.

The weak commutativity construction is intimately connected with non-abelian tensor products or, more specifically, with non-abelian exterior squares of Lie algebras (as defined in \cite{Ell1} and \cite{Ell2}). In fact, there is a quotient of $W(\mf{g})$ that is isomorphic to a certain distinguished ideal of $\mf{g} \wedge \mf{g}$, which in turn is isomorphic to the Schur multiplier $H_2(\mf{g};K)$. The ideal of $\chi(\mf{g})$ realizing this quotient can be defined as follows:
\[ R = R(\mf{g}) := [\mf{g}, [L,\mf{g}^{\psi}]] \subseteq W(\mf{g}).\]
The isomorphism between $W/R$ and the Schur multiplier of $\mf{g}$ is Theorem 1.6 in \cite{Men}.

As it turns out, $R(\mf{g})=0$ very often. Sufficient conditions for that are: $\mf{g}$ is abelian, $\mf{g}$ is perfect, or $\mf{g}$ is generated by two elements. The unique examples that we had where $R(\mf{g}) \neq 0$ were obtained with the aid of GAP \cite{GAP}. 

From Corollary \ref{Crlr1} we obtain:
\[ dim R(\mf{n}_{m,2}) = \frac{1}{24}(3m^4-2m^3-15m^2+14m),\]
which grows with $m$ and is non-zero for $m \geq 3$. Interestingly enough, we have
\[ dim R(\mf{n}_{m,2}) = dim R(\mf{n}_{m,3})\]
for all $m$. 


We do not have versions of Corollaries \ref{Crlr1} and \ref{Crlr2} for higher nilpotency classes. The reason for this is that some defining relators of $L(\mf{g})$ (those written in \eqref{Lrel1} in the text) are automatically trivial when $\mf{g}$ is nilpotent of class at most $3$, but not otherwise. Thus we do not think that we can write such a concrete description of $L(\mf{n}_{m,4})$, for instance.
%
%

Finally, using the concept of Gröbner-Shirshov bases applied to the presentation of $L(\mf{g})$, we deduce the following.
\begin{theorem} \label{Rinf}
 If $\mf{g}$ is free non-abelian of rank at least $3$, then $R(\mf{g})$ is infinite-dimensional.
\end{theorem}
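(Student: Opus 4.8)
\emph{Proof plan.}
A free Lie algebra $\mf{g}$ has vanishing Schur multiplier, $H_2(\mf{g};K)=0$, since its universal enveloping algebra is a free associative algebra and hence of global dimension one; by Theorem~1.6 of \cite{Men} this gives $R(\mf{g})=W(\mf{g})$, so it is enough to prove that $W(\mf{g})$ is infinite-dimensional. Grade $\chi(\mf{g})$ by giving every $x\in\mf{g}$ and every $x^{\psi}\in\mf{g}^{\psi}$ degree one; the relations $[x,x^{\psi}]=0$ are homogeneous, so $L$, $D$ and $W=L\cap D$ are graded subspaces. Since $\beta$ is homogeneous and surjective onto $\mf{g}\oplus\mf{g}$, we have $L/W=L/(L\cap D)\cong\beta(L)$, and a short computation identifies $\beta(L)$ with $\{(a,b)\in\mf{g}\oplus\mf{g}\ :\ a+b\in\mf{g}'\}$, so that $\dim_K\beta(L)_n=2\dim_K\mf{g}_n$ for $n\geq 2$ and $\dim_K\beta(L)_1=m$, where $m=\dim_K\mf{g}_1$ is the rank. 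Hence $\dim_K W_n=\dim_K L_n-\dim_K\beta(L)_n$ in every degree, and everything reduces to computing the Hilbert series of $L=L(\mf{g})$ and showing that, for $m\geq 3$, the surplus $\dim_K L_n-\dim_K\beta(L)_n$ is positive for infinitely many $n$.

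To access the Hilbert series of $L$ I would use the presentation of Theorem~\ref{Thm1}: fix a free generating set $a_1,\dots,a_m$ of $\mf{g}$, so that $L$ is presented by the generators $a_i-a_i^{\psi}$ and $[a_i,a_j]-[a_i,a_j]^{\psi}$ subject to the relators extracted from \eqref{fundiden} together with the relators \eqref{Lrel1}. The first step is to choose an admissible order on the Lyndon--Shirshov words in these generators --- for instance the one induced by the grading above and refined lexicographically --- and to read off the leading Lyndon--Shirshov monomial of each defining relator; because the relators coming from \eqref{fundiden} rewrite triple brackets of the degree-one generators, and those in \eqref{Lrel1} rewrite suitable longer monomials, the whole family orients as a rewriting system. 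The key step is to prove that, after adjoining a recursively describable family of further relators produced by its compositions, this family is a Gröbner--Shirshov basis of $L$, i.e.\ that every intersection composition and every inclusion composition of two of its members reduces to zero modulo the family. The Composition--Diamond lemma for Lie algebras then gives a $K$-basis of $L$ consisting of those Lyndon--Shirshov monomials in the generators that contain no leading term of the basis as a subword, and counting these degree by degree yields the Hilbert series of $L$.

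The proof finishes with a counting argument: one compares the number of reduced Lyndon--Shirshov monomials of degree $n$ with $\dim_K\beta(L)_n$ and shows the former is strictly larger for all large $n$ when $m\geq 3$ --- equivalently, one pins down an explicit infinite linearly independent subfamily of $W(\mf{g})=R(\mf{g})$. Translating through $\chi(\mf{g})\cong L\rtimes\mf{g}$ and $a_i^{\psi}=a_i-(a_i-a_i^{\psi})$, one expects this family to be represented by reduced monomials built from Lyndon--Shirshov words of unbounded length that genuinely involve at least three of the letters $a_1,\dots,a_m$ --- words that survive the reduction precisely because three letters leave enough room. This must be consistent with, and specialize to, the known equality $W(\mf{g})=0$ for $\mf{g}$ free of rank $2$, which in this language says that for $m=2$ the reduced monomials of degree $n$ number exactly $\dim_K\beta(L)_n$.

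The main obstacle is the Gröbner--Shirshov computation of the middle paragraph: handling the infinite presentation of Theorem~\ref{Thm1}, computing the leading terms of all relators and of all their compositions, and proving that the resulting infinite set is closed under composition --- only then does the Composition--Diamond lemma apply and produce the basis. A secondary, more bookkeeping-type difficulty is extracting from that basis the precise count of degree-$n$ monomials and verifying that the surplus over $\dim_K\beta(L)_n$ is eventually positive when $m\geq 3$ (and identically zero when $m=2$).
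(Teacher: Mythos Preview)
Your overall architecture matches the paper's: reduce to $R=W$ via $H_2(\mf{g};K)=0$, work inside the graded presentation of $L(\mf{g})$ from Theorem~\ref{Thm1}, and bring Gr\"obner--Shirshov methods to bear. The divergence is in how much of the Gr\"obner--Shirshov machinery is actually needed.

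You propose to compute a full Gr\"obner--Shirshov basis of the (infinite) defining ideal, extract the Hilbert series of $L$, and then compare with $\dim\beta(L)_n$ degree by degree. You correctly flag the closure-under-composition step as ``the main obstacle,'' and indeed this is where your plan is incomplete: the relator families \eqref{Lrel1}, \eqref{Lrel2.1}, \eqref{Lrel2.2} produce compositions in every degree, and there is no indication that the completion stabilises into a family whose leading terms one can enumerate. Without that, neither the Composition--Diamond basis nor the Hilbert series is available, and the counting argument cannot start.

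The paper sidesteps exactly this difficulty. Rather than completing $S$, it fixes explicit candidates
\[
f_n=[b_{1,2},\underbrace{a_2,\dots,a_2}_{n},a_3]-[[a_1,a_2],\underbrace{a_2,\dots,a_2}_{n},b_{2,3}]\in W=R,
\]
one for each even $n$, and shows $f_n\neq 0$ without ever knowing the completed set $\widehat{S}$. The point is a \emph{negative} statement about leading words: under the weighted deg-lex order with $d(a_i)=1$, $d(b_{i,j})=2$ and $b_{1,2}>b_{1,3}>b_{2,3}>a_1>a_2>a_3$, one has $\bar f_n=b_{1,2}a_2^{\,n}a_3$, and a direct inspection of \eqref{Lrel1}--\eqref{Lrel2.2} shows that no monomial of any $s\in S$ has as its letters only $b_{1,2}$ and copies of $a_2$, nor only copies of $a_2$ and a single $a_3$. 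Crucially, these ``forbidden letter patterns'' are preserved under composition and reduction, so they hold for every $s\in\widehat{S_m}$ as well. It follows that no $\bar s$ can be a proper subword of $\bar f_n$; a separate short argument rules out $\bar s=\bar f_n$ (all original relators have odd length, and any composition/reduction that could reach this word would have to pass through a monomial already violating the letter constraints). Theorem~\ref{thmShirshov} then gives $f_n\notin\langle\!\langle S\rangle\!\rangle$, and since the $f_n$ have pairwise distinct degrees they are linearly independent in $R(\mf{g})$. The case of rank $>3$ follows by projecting onto rank~$3$.

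So the gap in your plan is not a wrong idea but an unnecessary one: you are trying to determine the \emph{entire} normal-form basis of $L$, whereas it suffices to certify that a single infinite family of associative words occurs as leading words of nonzero elements of $W$. The paper's combinatorial invariant --- the multiset of letters occurring in a monomial, tracked through compositions and reductions --- is precisely the device that lets one draw conclusions about $\widehat{S}$ without computing it. Your alternative phrasing (``equivalently, one pins down an explicit infinite linearly independent subfamily'') is in fact the route that works; the Hilbert-series route, as stated, does not close.
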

We obtain from the theorem the first example of a finitely presentable Lie algebra with $W(\mf{g})$ of infinite dimension. A proof that the analogue of Theorem \ref{Rinf} also holds for groups has been announced by Bridson and Kochloukova (see \cite[Question~5.3]{BriKoc}).

Gröbner-Shirshov bases are designed in a way that we can in theory find a basis of a finitely presented Lie algebra, but in our case we only deduce that a carefully chosen infinite subset of $R(\mf{g})$ is linearly independent. 

The following corollary is immediate.
\begin{crlr}
If $\mf{g}$ is free non-abelian of rank at least $3$, then $\chi(\mf{g})$ is of infinite cohomological dimension.
\end{crlr}
\end{section}

\begin{section}{Preliminaries}
Let $\mb{N}$ denote the set $\{1,2,3, \ldots \}$ of positive integers. We fix a field $K$, with $char(K) \neq 2$, and consider only Lie algebras over $K$.
 
 We will use the convention of right-normed brackets, that is:
 \[ [x_1, x_2, \dots, x_{n-1}, x_n] = [x_1, [x_2, \dots, [x_{n-1},x_n] \ldots]]\]
 for any elements $x_1, \ldots, x_n$ of a Lie algebra.
 
 For any subset $X$ of a Lie algebra $\mf{g}$, we denote by $\langle X \rangle$ (resp. $\langle\langle X\rangle \rangle$) the subalgebra (resp. the ideal) of $\mf{g}$ generated by $X$. The universal enveloping algebra of $\mf{g}$ is denoted by $\mc{U}(\mf{g})$.
 
 Let $\mf{g}$ be a free Lie algebra of rank $n$. Denote by $\gamma_k(\mf{g})$ the $k$-th term of the lower central series, that is, $\gamma_1(\mf{g})= \mf{g}$ and $\gamma_{k+1}(\mf{g}) = [\mf{g}, \gamma_k(\mf{g})]$ for all $k$. \textit{Witt's dimension formula} (\cite{Bou}, Théorème 3 of II.3.3)  tell us the dimension of the successive quotients of this series:
  \[dim (\gamma_k(\mf{g})/\gamma_{k+1}(\mf{g})) = \frac{1}{k} (\sum_{d|k} \mu(d) n^{k/d}),\]
 where $\mu$ is the Möbius function. Notice that we can use this formula to compute the dimension of the free nilpotent Lie algebra of class $c$ on $n$ generators.
 
 \begin{subsection}{Homology of Lie algebras}
The homology of a Lie algebra is defined in terms of its universal enveloping algebra. For a $\mf{g}$-module $A$, we put
\[ H_i(\mf{g};A) = Tor_i^{\mc{U}(\mf{g})}(A,K).\]
We notice that if $\mf{g}$ is a finitely presented Lie algebra, then the trivial $\mc{U}(\mf{g})$-module $K$ admits a free resolution that is finitely generated up to degree $2$. From this it follows that the homologies $H_i(\mf{g};K)$ are finite-dimensional for $i \leq 2$. 

For a short exact sequence 
$\mf{h} \rightarrowtail \mf{g} \twoheadrightarrow \mf{q}$ of Lie algebras and a $\mc{U}(\mf{g})$-module $A$, there is a Lyndon-Hochschild-Serre (LHS) spectral sequence
\[ E_{p,q}^2= H_p(\mf{q}; H_q(\mf{h}; A)) \Rightarrow H_{p+q}(\mf{g}; A),\]
which is convergent and concentrated in the first quadrant. As usual, the differential $d^r$ of the $r$-th page has bidegree $(-r,r-1)$. If $A=K$ is the trivial $\mc{U}(\mf{g})$-module and $\mf{h}$ is a \textit{central} ideal of $\mf{g}$, then the associated $5$-term exact sequence can be written as
\[H_2(\mf{g}; K) \to H_2(\mf{q}; K) \to \mf{h} \to H_1(\mf{g}; K) \to H_1(\mf{q};K) \to 0.\]
For details see \cite[Chapter~7]{Wei}, for instance.
%
\end{subsection}

\begin{subsection}{Gröbner-Shirshov bases} \label{Shi}
We recall here briefly the theory of Gröbner-Shirshov bases, following the exposition in \cite{BKbook}. The original arguments are due to Shirshov \cite{Shirshov2}, and the modern approach was initiated by Bokut \cite{Bokut}.

Let $\mf{g}$ be the free Lie algebra with free basis $X = \{x_1, \ldots, x_n\}$. Associative words with letters in $X$ are ordered lexicographically with $x_1 > \cdots > x_n$ and $u > v$ if $u$ is a initial subword of $v$. One of such words $w$ is \textit{regular} (or a \textit{Lyndon-Shirshov associative word}) if $w=uv$ implies $uv>_{lex} vu$, for  non-trivial subwords $u,v$. A non-associative word $[w]$ is \textit{regular} (or a \textit{Lyndon-Shirshov non-associative word}) if the associative word $w$ obtained by removing all brackets is associative regular and:
\begin{enumerate}
 \item If $[w]=[u][v]$, then both $[u]$ and $[v]$ are non-associative regular, and
 \item If $[w]=[ [u_1][u_2] ][v]$, then $u_2 \leq_{lex} v$. 
\end{enumerate}
For any regular associative word $w$ there is a unique bracketing $(w)$ that is non-associative regular, and the set of all non-associative regular words is a basis of the free Lie algebra \cite{Shirshov1}. For short we will also call these words \textit{monomials}.

Let $d: X \to \mb{N}$ be any function. Consider its extension to the set of regular associative words:
\[ d( x_{i_1} \cdots x_{i_m} )  = d( x_{i_1} ) +  \ldots + d(x_{i_m} ).\] 
We say that $d(w)$ is the \textit{degree} of $w$.
We consider the \textit{weighted deg-lex} ordering on the set of associative regular words: $w_1 \preceq w_2$ if
$d(w_1) < d(w_2)$, or if  $d(w_1)= d(w_2)$ but $w_1 \leq_{lex} w_2$. The degree function and the associated ordering can be considered  also  for non-associative regular words via the bijection given by the unique bracketing.

Any $f \in \mf{g}$ may be written as a linear combination of regular non-associative words. We denote by $\bar{f}$ the highest (with respect to the weighted deg-lex ordering) corresponding regular associative word appearing with non-zero coefficient. We say that $\bar{f}$ is the \textit{associative carrier} of $f$. If the coefficient of $\bar{f}$ in $f$ is $1 \in K$, we say that $f$ is \textit{monic}.

Suppose that $f,g \in \mf{g}$ are monic and satisfy $\bar{f}= ab$ and $\bar{g}=bc$ for some associative words $a,b,c$. Let $u=abc$. Notice that $u$ is regular. As in \cite[Lemma~2.11.15]{BKbook}, we can consider two special bracketings $[u]_1$ and $[u]_2$ for the word $u$ such that $[u]_1$ extends the regular bracketing $(\bar{f})$ of $\bar{f}$ and $[u]_2$ extends the regular bracketing $(\bar{g})$ of $\bar{g}$, and $\bar{ [u]}_1  = \bar{[u]}_2=u$.

Let $u_1$ and $u_2$ be the elements of $\mf{g}$ obtained by from $[u]_1$ and $[u]_2$ by substituting $(\bar{f})$ and $(\bar{g})$ with $f$ and $g$, respectively. The \textit{first-order composition} $(f,g)^I_u$ of $f$ and $g$ with respect to $u$ defined as
\[(f,g)^I_u = u_1 - u_2.\]

Similarly, suppose that $\bar{g}$ is a subword of $\bar{f}$ for some monic elements $f,g \in \mf{g}$. Again by \cite[Lemma~2.11.15]{BKbook} we can find a bracketing $[u]$ of $u = \bar{f}$ that extends the regular bracketing $(\bar{g})$ of $\bar{g}$, and $\bar{[u]} = u$. By substituting $(\bar{g})$ with $g$ in $[u]$ we obtain an element $f_{\ast}$ of $\mf{g}$.  The \textit{second-order composition} $(f,g)^{II}_u$ of $f$ and $g$ is 
\[(f,g)^{II}_u = f - f_{\ast}.\]

A subset $S \subset \mf{g}$ is \textit{reduced} if all its elements are monic (that is, the monomial associated to $\bar{s}$ has coefficient $1$ for all $s \in S$) and if no second-order composition can be formed between two of its elements. 

Finally, a \textit{Gröbner-Shirshov basis} of an ideal $I \subset \mf{g}$ is a reduced set $S \subset \mf{g}$ such that 
$I = \langle \langle S \rangle \rangle$ and such that if $f, g \in S$ define a first-order composition with respect to some word $u$, then
\[ (f,g)^I_u = \sum_{i=1}^m f_i,\]
where each $f_i$ lies in $\langle \langle s_i \rangle \rangle$ for some $s_i \in S$ and $\bar{f}_i \prec u$ for $1 \leq i \leq m$.

\begin{theorem}\cite[Lemma~3.2.7]{BKbook} \label{thmShirshov}
 Let $S$ be a Gröbner-Shirshov basis of the ideal $I \subset \mf{g}$ and let $f \in \mf{g}$. If $f \in I$, then
 $\bar{f}$ contains $\bar{s}$ as a subword for some $s \in S$.
\end{theorem}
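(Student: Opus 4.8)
The plan is to recognise this as the Composition-Diamond lemma for Lie algebras and to run the standard minimal-representation induction with respect to the weighted deg-lex ordering. First I would record a normal-form statement: since $I=\langle\langle S\rangle\rangle$ is, as a $K$-subspace, spanned by the iterated adjoint images $(\mathrm{ad}\,y_{i_1})\cdots(\mathrm{ad}\,y_{i_k})(s)$ with $s\in S$ and $y_{i_j}\in X$, and each such element rewrites --- after eliminating non-regular brackets and invoking \cite[Lemma~2.11.15]{BKbook} --- as a $K$-linear combination of \emph{$S$-words}, i.e.\ bracketings $(a\,\bar{s}\,b)$ of the regular word $a\,\bar{s}\,b$ extending the regular bracketing $(\bar{s})$ of some $\bar{s}$, with $(\bar{s})$ then replaced by $s$ itself, every element of $I$ is a finite $K$-linear combination of monic $S$-words. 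The key feature is that an $S$-word $(a\,\bar{s}\,b)$ has associative carrier exactly $a\,\bar{s}\,b$, which visibly contains $\bar{s}$ as a subword.

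Then, given $0\neq f\in I$, I would choose among all expressions $f=\sum_j\lambda_j w_j$ with the $w_j$ monic $S$-words one minimising the largest carrier $u:=\max_j\bar{w}_j$, and, among those, minimising the number $N$ of indices $j$ with $\bar{w}_j=u$. If the coefficients of the $u$-terms do not sum to zero, then $\bar{f}=u$ and we are done, since $u$ is the carrier of an $S$-word and so contains some $\bar{s}$ as a subword. If those coefficients cancel, I would pick two of the $u$-terms, arising from relators $s_1,s_2\in S$ whose carriers occur as subwords of $u$, and split into cases according to how the two occurrences sit inside $u$: disjoint, properly overlapping, or nested. The nested case cannot occur with $s_1\neq s_2$, since it would exhibit a second-order composition, contradicting that $S$ is reduced; with $s_1=s_2$ it degenerates to coinciding occurrences. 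In the overlapping (and coinciding) case the difference of the two chosen $S$-words equals, modulo $S$-words of carrier $\prec u$, the first-order composition of $s_1$ and $s_2$ at the relevant subword $u'$ of $u$, and the Gröbner-Shirshov hypothesis says this composition reduces to elements of $\langle\langle s_i\rangle\rangle$ of carrier $\prec u'$, which inflate to $S$-words of carrier $\prec u$. In the disjoint case the same conclusion follows directly from the Jacobi identity together with \cite[Lemma~2.11.15]{BKbook}. Substituting this rewriting back into $f=\sum_j\lambda_j w_j$ produces a new representation in which $u$ has dropped or $N$ has strictly decreased, contradicting minimality; hence the cancelling case is impossible and $\bar{f}$ is always the carrier of an $S$-word.

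The main obstacle is the bookkeeping in the cancelling case: one must check that the two special bracketings $[u']_1$ and $[u']_2$ attached to two overlapping relators differ from the first-order composition $(s_1,s_2)^I_{u'}$ only by $S$-words of smaller carrier, and that a relation valid for a subword $u'$ can be inflated to a relation for $u$ without introducing $S$-words of carrier $\succeq u$. All of this is the content of \cite[Lemma~2.11.15]{BKbook} and the lemmas accompanying it, so a self-contained account would spend its effort there rather than on the induction above; since we need only the statement as quoted and it is \cite[Lemma~3.2.7]{BKbook}, I would simply cite it.
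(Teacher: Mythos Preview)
Your proposal is correct and in fact exceeds what the paper does: the paper gives no proof at all of this theorem, simply citing \cite[Lemma~3.2.7]{BKbook} and remarking afterward that the proofs there carry over to the weighted deg-lex setting because the ordering is admissible and has the descending chain condition. Your final sentence --- that one would simply cite the result --- is exactly the paper's approach, and your sketch of the minimal-representation induction is a faithful outline of the standard Composition-Diamond argument that the citation points to.
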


\begin{rem}
 In \cite{BKbook} (and in Shirshov's article \cite{Shirshov2})  it is assumed that the monomial ordering is the usual deg-lex, that is, $d(x_i) = 1$ for all $i$. The proofs, however, carry to our setting because the weighted deg-lex ordering is \textit{admissible}, i.e. $u \preceq v$ implies $aub \preceq avb$ for all $a,b$, and has the descending chain condition.
\end{rem}

If $S \subset \mf{g}$ is a finite homogeneous set (in the sense of the degree function $d$ defined above), then we can decide if a certain fixed element $f \in \mf{g}$ belongs to the ideal generated by $S$ as follows. First, multiplying by elements of $K$ we can assume that $S$ is monic. Then we \textit{reduce} $S$, that is, if $s_1,s_2\in S$ and $\bar{s}_1$ is a subword of $\bar{s}_2$, then we can find some $s_0 \in \langle \langle s_1\rangle \rangle$ with $\bar{s}_0 =\bar{s}_2$ and then substitute $s_2$ with $s_2 - s_0$ in $S$. This decreases the element in the weighted deg-lex ordering, so after finitely many steps we reach a reduced set $S'$ that generates the same ideal as $S$.

Now we \textit{complete} $S'$ in the following sense: we take all compositions between elements of $S'$, add them to the generating set, and then reduce again as in the previous paragraph. We repeat this process until we reach a reduced set $S^{(n)}$ such that all compositions between elements of $S^{(n)}$ are
either an element of $S^{(n)}$, or of degree greater than the degree of $\bar{f}$. 

If $\bar{f}$ does not contain $\bar{s}$ as a subword for some $s \in S^{(n)}$, then $f \notin \langle \langle S^{(n)}\rangle \rangle$. Otherwise we can find some $f_2 \in \langle \langle S^{(n)} \rangle \rangle$ such that $\bar{f}_2 = \bar{f}$, and the problem is reduced to deciding if $f-f_2$ (which is smaller with respect to weighted deg-lex) lies in this ideal. 
\end{subsection}
  
\end{section}

\begin{section}{A presentation for \texorpdfstring{$L(\mf{g})$}{L(g)}}  \label{presL}
We recall that for any Lie algebra $\mf{g}$ the weak commutativity construction is defined as
\[ \chi(\mf{g}) = \langle \mf{g}, \mf{g}^{\psi} |\ \ [x,x^{\psi}]= 0 \hbox{ for all } x \in \mf{g} \rangle,\]
for $\mf{g}^{\psi}$ an isomorphic copy of $\mf{g}$ via $x \mapsto x^{\psi}$.
The ideal $L(\mf{g})$ is the kernel of the homomorphism $\alpha: \chi(\mf{g}) \to \mf{g}$ defined by $\alpha(x)=\alpha(x^{\psi}) = x$ for all $x \in\mf{g}$.

 Let $\mf{g}$ be the free Lie algebra with free basis $x_1, \ldots, x_m$. In this section we deduce a presentation for the Lie subalgebra $L(\mf{g}) \subseteq \chi(\mf{g})$. The following lemma will be used often.
 
 \begin{lemma} \label{3id}
  Let $u,v,w \in \mf{g}$. Then:
  \[ [u-u^{\psi}, v-v^{\psi}, w-w^{\psi}] = [u,v,w] - [u,v,w]^{\psi}.\]
 \end{lemma}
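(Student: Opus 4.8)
The plan is to prove the identity $[u-u^\psi,[v-v^\psi,w-w^\psi]]=[u,[v,w]]-[u,[v,w]]^\psi$ (right-normed brackets) by a direct computation, reducing everything to a single vanishing statement. First I would record the linearized form of the defining relations: substituting $x+y$ for $x$ in $[x,x^\psi]=0$ and using $\mathrm{char}(K)\neq2$ yields
\[
[x,y^\psi]+[y,x^\psi]=0\qquad\text{for all }x,y\in\mf{g},
\]
equivalently $[x,y^\psi]=[x^\psi,y]=-[y,x^\psi]$; in particular the bilinear expression $[v,w^\psi]$ is alternating in $(v,w)$. Writing $\bar{a}:=a-a^\psi$, I would expand $[\bar{u},[\bar{v},\bar{w}]]$ by bilinearity into eight terms. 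Two of them, $[u,[v,w]]$ and $-[u^\psi,[v^\psi,w^\psi]]=-[u,[v,w]]^\psi$, are exactly the right-hand side; using $[v,w^\psi]=[v^\psi,w]$ and $[u,m^\psi]=[u^\psi,m]$ (with $m=[v,w]$), the remaining six ``mixed'' terms collapse, leaving
\[
[\bar{u},[\bar{v},\bar{w}]]=\bigl([u,[v,w]]-[u,[v,w]]^\psi\bigr)-2\,[\bar{u},[v,w^\psi]].
\]
So the lemma reduces to showing that the trilinear map $g(u,v,w):=[\bar{u},[v,w^\psi]]$ vanishes identically on $\mf{g}$.

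For this vanishing I would show $g$ is totally antisymmetric. A Jacobi computation — expanding $[\bar{u},[v,w^\psi]]=[[\bar{u},v],w^\psi]+[v,[\bar{u},w^\psi]]$, replacing $[\bar{u},v]$ by $[u,\bar{v}]$ and $[\bar{u},w^\psi]$ by $[u^\psi,\bar{w}]$, and simplifying with the linearized relation — rewrites $g(u,v,w)=[[v,u^\psi],\bar{w}]$. From this second form, $g(u,u,w)=[[u,u^\psi],\bar{w}]=0$, and since $[v,u^\psi]=-[u,v^\psi]$ one gets $g(u,v,w)=-g(v,u,w)$; from the first form, the alternation of $[v,w^\psi]$ gives $g(u,v,w)=-g(u,w,v)$. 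Thus $g$ is antisymmetric in the first two and in the last two slots, hence totally antisymmetric. Applying the Jacobi identity to $\bar{u},\bar{v},\bar{w}$ and cancelling the $\overline{(\cdot)}$-parts via the Jacobi identity in $\mf{g}$ gives $g(u,v,w)+g(v,w,u)+g(w,u,v)=0$, which together with total antisymmetry forces $g\equiv0$. Alternatively, and more quickly, one may observe that $\bar{u}\in L=\ker\alpha$ while $\beta([v,w^\psi])=[(v,0),(0,w)]=0$ so $[v,w^\psi]\in D=\ker\beta$, whence $g(u,v,w)\in[L,D]=0$ by the fact recorded in the introduction.

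The main obstacle is precisely this vanishing of $g$: the purely formal manipulations (the Jacobi identity together with the single relation $[x,y^\psi]+[y,x^\psi]=0$) are slippery, and many natural attempts to kill the mixed terms directly just reproduce the original expression. The workable inputs are the two distinct presentations of $g$ — as $[\bar{u},[v,w^\psi]]$ and as $[[v,u^\psi],\bar{w}]$ — which make its two antisymmetries visible, or else the external structural fact $[L,D]=0$; once one of these is in hand, everything else is bookkeeping with bilinearity and the linearized relation.
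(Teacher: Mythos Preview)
Your argument is correct and follows essentially the same route as the paper: linearize the defining relation to get $[x,y^\psi]=[x^\psi,y]$, expand the left-hand side, and reduce to the vanishing of $g(u,v,w)=[\bar u,[v,w^\psi]]$, which the paper obtains by quoting the identity $[x-x^\psi,[y,z^\psi]]=0$ from \cite{Men} (your approach (b) via $[L,D]=0$ is the same fact in slightly different packaging).

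One caveat on your alternative approach (a): even granting the two antisymmetries, total antisymmetry together with the cyclic relation only yields $3\,g(u,v,w)=0$, so this route needs $\mathrm{char}(K)\neq 3$, which the paper does not assume. Your approach (b) avoids this, so the proposal stands as written.
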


 \begin{proof}
  Recall that the following identities hold in $\chi(\mf{g})$:
  \[ [x,y^{\psi}] = [x^{\psi},y]\]
  for all $x,y \in\mf{g}$. This can be deduced from the defining relation $[x-y,(x-y)^{\psi}]=0$. Furthermore, we have
  \[ [x-x^{\psi}, [y,z^{\psi}]]=0\]
  for all $x,y,z \in \mf{g}$, which is deduced as in Lemma 3.2 in \cite{Men}. It suffices now to compute the brackets on the left-hand side of the equation and apply these identities to obtain the desired result.
 \end{proof}

 First, let us find a nice generating set for $L = L(\mf{g})$. Recall that $L$ is generated as a Lie algebra by the elements $u-u^{\psi}$, with 
 $u \in \mf{g}$  (\cite{Men}, Lemma 3.1). By linearity we can further assume that $u$ is a (right-normed) bracket involving the generators $x_1, \ldots, x_m$. Using Lemma 
 \ref{3id} we can rewrite any such $u-u^{\psi}$, where $u$ has length at least $3$, as a product of terms of smaller length. More concretely, if $u = [x_{i_1}, \ldots, x_{i_n}]$
 and $n \geq 3$, then:
\[
 u-u^{\psi}=  
     \begin{cases}
       [x_{i_1}-x_{i_1}^{\psi}, \ldots, x_{i_n}-x_{i_n}^{\psi}] &\quad\text{if n is odd},\\
       [x_{i_1}-x_{i_1}^{\psi}, \ldots, x_{i_{n-2}}-x_{i_{n-2}}^{\psi}, [x_{i_{n-1}},x_{i_n}] - [x_{i_{n-1}},x_{i_n}]^{\psi}]  &\quad\text{otherwise.} \\ 
     \end{cases}
\]
This tells us that the elements $x_i-x_i^{\psi}$ and $[x_i,x_j]-[x_i,x_j]^{\psi}$ for $1 \leq i < j \leq m$ generate $L$ as a Lie algebra. 

We introduce the notation:
\[ \tilde{a}_i := x_i-x_i^{\psi}\]
for $1 \leq i \leq m$ and 
\[ \tilde{b}_{i,j} := [x_i,x_j] - [x_i,x_j]^{\psi}\]
for $1 \leq i < j \leq m$. For convenience we set $\tilde{b}_{i,i}=0$ and $\tilde{b}_{i,j} = - \tilde{b}_{j,i}$ for all $1 \leq j <  i \leq m$. 
 
In the next two lemmas we deduce some relations among these generators. They all come from suitable interpretations of the identity of Lemma \ref{3id}. We
will show later that they actually form a full set of relations for $L$ with the generators that we chose.

\begin{lemma} \label{relL1}
For any $i,j,k,l$, we have:
\[[\tilde{a}_i, \tilde{a}_j, \tilde{b}_{k,l}] = [\tilde{a}_i, \tilde{b}_{j,k}, \tilde{a}_l] + [\tilde{a}_i, \tilde{a}_k, \tilde{b}_{j,l}]\]
\end{lemma}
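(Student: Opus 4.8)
The plan is to reduce each of the three brackets occurring in the statement to the free Lie algebra $\mf{g}$ by means of Lemma \ref{3id}, at which point the whole identity collapses to the Jacobi identity. The first step is to recognize that every bracket in the statement is of the form $[u - u^{\psi}, v - v^{\psi}, w - w^{\psi}]$ for suitable $u, v, w \in \mf{g}$, using that $\psi$ is a Lie algebra isomorphism (so $[x_p, x_q]^{\psi} = [x_p^{\psi}, x_q^{\psi}]$, hence $\tilde{b}_{p,q} = [x_p,x_q] - [x_p,x_q]^{\psi} = w - w^{\psi}$ with $w = [x_p,x_q]$): for the left-hand side take $(u,v,w) = (x_i, x_j, [x_k,x_l])$; for the first term on the right, $(u,v,w) = (x_i, [x_j,x_k], x_l)$; for the second, $(u,v,w) = (x_i, x_k, [x_j,x_l])$.

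Next I would apply Lemma \ref{3id} to each term. Writing $t_1 = [x_i, x_j, [x_k,x_l]]$, $t_2 = [x_i, [x_j,x_k], x_l]$ and $t_3 = [x_i, x_k, [x_j,x_l]]$ in $\mf{g}$, the lemma becomes the assertion $t_1 - t_1^{\psi} = (t_2 - t_2^{\psi}) + (t_3 - t_3^{\psi})$ in $\chi(\mf{g})$. Since both $\mf{g}$ and $\mf{g}^{\psi}$ embed in $\chi(\mf{g})$ and $\psi$ respects brackets, it suffices to prove $t_1 = t_2 + t_3$ in $\mf{g}$ (applying $\psi$ then yields the primed version, and adding the two gives the displayed equality). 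Peeling off the outer bracket $[x_i, -]$ by bilinearity, this reduces to $[x_j, [x_k,x_l]] = [[x_j,x_k], x_l] + [x_k, [x_j,x_l]]$, which is exactly the Jacobi identity.

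There is essentially no obstacle here; the argument is pure bookkeeping. The two points worth checking are: (i) that the right-normed convention $[a,b,c] = [a,[b,c]]$ is consistent with the way Lemma \ref{3id} groups its brackets, so that each term really matches the form required by that lemma; and (ii) that the conventions $\tilde{b}_{p,p} = 0$ and $\tilde{b}_{p,q} = -\tilde{b}_{q,p}$ agree with $[x_p,x_p] = 0$ and $[x_p,x_q] = -[x_q,x_p]$ in $\mf{g}$, so that the identity holds for arbitrary indices $i,j,k,l$ and not merely for ordered tuples. I would also note that freeness of $\mf{g}$ is never used: the same computation shows that the identity of Lemma \ref{relL1} holds in $\chi(\mf{h})$ for every Lie algebra $\mf{h}$, which foreshadows the later fact that these are genuine defining relations of $L(\mf{h})$ in general.
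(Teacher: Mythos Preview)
Your proposal is correct and follows essentially the same route as the paper: apply Lemma~\ref{3id} to write each of the three triple brackets as $t-t^{\psi}$ for the appropriate $t\in\mf{g}$, and then observe that the resulting identity $t_1=t_2+t_3$ is the Jacobi identity after peeling off $[x_i,-]$. The only slip is the word ``adding'' in your parenthetical remark---you mean to subtract the $\psi$-version from the original to obtain $t_1-t_1^{\psi}=(t_2-t_2^{\psi})+(t_3-t_3^{\psi})$.
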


\begin{proof}
We use Lemma \ref{3id}, the Jacobi identity and then Lemma \ref{3id} again:
\[
  \begin{array}{lcl} 
[\tilde{a}_i, \tilde{a}_j, \tilde{b}_{k,l}]  & = & [x_i-x_i^{\psi}, x_j-x_j^{\psi}, [x_k,x_l]-[x_k,x_l]^{\psi}]\\
   & = &  [x_i,x_j,x_k,x_l] - [x_i,x_j,x_k,x_l]^{\psi} \\
   & = & [x_i,[x_j,x_k],x_l] - [x_i,[x_j,x_k],x_l]^{\psi} + [x_i,x_k,x_j,x_l] - [x_i,x_k,x_j,x_l]^{\psi}  \\
   & = & [\tilde{a}_i,\tilde{b}_{j,k},\tilde{a}_l] + [\tilde{a}_i,\tilde{a}_k,\tilde{b}_{j,l}],
\end{array} 
\]
as we wanted.
\end{proof}

\begin{lemma}  \label{relL2}
 Let $n > 1$ be an odd integer and let $u_i$ be either some $\tilde{a}_j$ or some $\tilde{b}_{j,j'}$, for all $i$. Then:
\[[u_1, \ldots, \tilde{b}_{j,j'}, \ldots, \tilde{b}_{k,k'}, \ldots, u_n] = [u_1, \ldots, [\tilde{a}_j,\tilde{a}_{j'}], \ldots, [\tilde{a}_k,\tilde{a}_{k'}], \ldots, u_n]\]
and
\[ [u_1, \ldots, \tilde{b}_{j,j'}, \ldots, u_{n-2}, \tilde{a}_{n-1}, \tilde{a}_n] = [u_1, \ldots, [\tilde{a}_j,\tilde{a}_{j'}], \ldots, u_{n-2}, \tilde{b}_{n-1,n}].\] 
 \end{lemma}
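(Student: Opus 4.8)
The plan is to show that each side of both identities equals the same element $w-w^{\psi}$ of $L$ for a suitable $w\in\mf{g}$; throughout write $\widehat{c}:=c-c^{\psi}$ for $c\in\mf{g}$, so that $\tilde{a}_i=\widehat{x_i}$ and $\tilde{b}_{i,j}=\widehat{[x_i,x_j]}$, and recall that brackets are right-normed. The first step is to upgrade Lemma \ref{3id} to arbitrary odd length: for every odd $n\geq 3$ and all $c_1,\dots,c_n\in\mf{g}$,
\[ [\widehat{c_1},\widehat{c_2},\dots,\widehat{c_n}]=\widehat{[c_1,c_2,\dots,c_n]}. \]
This follows by induction on $n$, the case $n=3$ being Lemma \ref{3id}; for the inductive step one writes $[\widehat{c_1},\dots,\widehat{c_n}]=[\widehat{c_1},\widehat{c_2},[\widehat{c_3},\dots,\widehat{c_n}]]$, applies the inductive hypothesis to the inner bracket of odd length $n-2$ to rewrite it as $\widehat{d}$ with $d=[c_3,\dots,c_n]$, and then invokes Lemma \ref{3id} on $[\widehat{c_1},\widehat{c_2},\widehat{d}]$.

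Granting this, the left-hand side of each identity is a right-normed bracket of odd length $n$ in elements of the form $\widehat{c}$, and so equals $\widehat{w}$, where $w\in\mf{g}$ is the corresponding iterated bracket of the $x_i$'s with each $\tilde{b}_{k,l}$ read as the sub-bracket $[x_k,x_l]$. For the right-hand sides the idea is to eliminate the inner pairs $[\tilde{a}_k,\tilde{a}_{k'}]=[\widehat{x_k},\widehat{x_{k'}}]$ by the Jacobi identity: a pair occupying any slot other than the last can be opened into two consecutive single slots (for a pair in the first or an interior slot this is $[[\widehat{x_k},\widehat{x_{k'}}],Y]=[\widehat{x_k},\widehat{x_{k'}},Y]-[\widehat{x_{k'}},\widehat{x_k},Y]$ applied to the appropriate tail $Y$), while a pair already in the last slot \emph{is} its two single slots. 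Here the parity hypothesis is precisely what makes things work: opening both pairs in the first identity, or the single pair in the second identity (where the terminal $\tilde{b}_{n-1,n}=\widehat{[x_{n-1},x_n]}$ occupies one slot, not two), rewrites the right-hand side as a linear combination of right-normed brackets in single $\widehat{c}$'s of odd length — $n+2$ in the first identity, $n$ in the second. Applying the generalized Lemma \ref{3id} to each summand expresses it as $\widehat{v}$ for an iterated bracket $v\in\mf{g}$, so the right-hand side equals $\widehat{w'}$ with $w'$ the corresponding linear combination in $\mf{g}$; and $w'$ collapses — by running the same Jacobi steps backwards, now inside $\mf{g}$ — to exactly the bracket $w$ found on the left. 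Hence the two sides agree.

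The only genuine work lies in the bookkeeping of this last step: tracking which slot each pair occupies, handling the degenerate positions (first and last slot) separately, and checking that the signs and reorderings produced by Jacobi in $\chi(\mf{g})$ are matched term by term by the same manipulation performed in $\mf{g}$. I expect this to be the main obstacle, though a routine one; no identity beyond Lemma \ref{3id} and the Jacobi identity is required. If one prefers, the reshuffling of $\tilde{a}$'s and $\tilde{b}$'s inside the bracket can instead be organised by iterating Lemma \ref{relL1}, but the argument sketched above seems the most direct.
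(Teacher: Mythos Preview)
Your proposal is correct and follows essentially the same route as the paper's proof. The paper argues in one sentence that Lemma \ref{3id} lets one rewrite any odd-length right-normed bracket in elements of the form $v-v^{\psi}$ as a single such element, and then says both sides reduce to the same $\widehat{w}$; you make this precise by proving the odd-length extension of Lemma \ref{3id} explicitly and by spelling out the Jacobi step needed on the right-hand side to turn the inner $[\tilde{a}_j,\tilde{a}_{j'}]$ slots into consecutive single slots, which the paper leaves implicit. Your parity bookkeeping (lengths $n+2$ and $n$ for the two identities) is exactly what the paper's terse ``verify that the two sides \ldots\ reduce to the same element'' is asking the reader to supply.
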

 
\begin{proof}
Notice that Lemma \ref{3id} actually gives a way of writing any right-normed bracket of odd length involving elements of the form $v-v^{\psi}$ as a single
element of this same form. It suffices then to apply this reasoning to each of the brackets in the statement of the lemma and verify that the two sides
of each equation actually reduce to the same element.
\end{proof}

Define $\mc{L}$ as the Lie algebra generated by the symbols $a_i$ and $b_{i,j}$, for $1 \leq i < j \leq m$, subject to the relators as in Lemmas 
\ref{relL1} and \ref{relL2} (without the tildes), that is,
\begin{equation}  \label{Lrel1}
r = [a_i, a_j, b_{k,l}] - [a_i, b_{j,k}, a_l] - [a_i, a_k, b_{j,l}]
\end{equation}
for all $i,j,k,l$,
\begin{equation} \label{Lrel2.1}
r = [u_1, \ldots, b_{j,j'}, \ldots, b_{k,k'}, \ldots, u_n] - [u_1, \ldots, [a_j,a_{j'}], \ldots, [a_k,a_{k'}], \ldots, u_n]
\end{equation}
for all $u_i \in \{a_t, b_{s,t}\}_{s,t}$, all indices and $n\geq 3$ an odd integer and
\begin{equation} \label{Lrel2.2}
r = [u_1, \ldots, b_{j,j'}, \ldots, u_{n-2}, a_{n-1}, a_n] - [u_1, \ldots, [a_j,a_{j'}], \ldots, u_{n-2}, b_{n-1,n}],
\end{equation}
for all $u_i \in \{a_t, b_{s,t}\}_{s,t}$, all indices and $n\geq 3$ an odd integer. Again we are using the convention that $b_{i,j} = - b_{j,i}$ and $b_{i,i}=0$, so that
all expressions above make sense. We will show that $\mc{L} \simeq L(\mf{g})$.

\begin{lemma}  \label{actionbrac}
 The following formulas define an action of $\mf{g}$ on $\mc{L}$:
 \[ x_s \cdot a_i = \frac{1}{2} ( [a_s,a_i] + b_{s,i})\]
 and 
 \[ x_s \cdot b_{i,j} = \frac{1}{2} ( [a_s,b_{i,j}] + [a_s, a_i, a_j]),\]
 for all $s,i,j$.
\end{lemma}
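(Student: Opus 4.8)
The plan is to verify directly that the two formulas define a derivation of $\mc{L}$ that, moreover, respects the bracket of $\mf{g}$ (i.e.\ that $x_s$ and $x_t$ act as the bracket $[x_s,x_t]$ would). Concretely, one first checks that each of the two formulas is well-defined, meaning that for a fixed $s$ the assignment $a_i \mapsto x_s\cdot a_i$, $b_{i,j}\mapsto x_s\cdot b_{i,j}$ extends to a derivation $\partial_s$ of the free Lie algebra on the symbols $a_i, b_{i,j}$ (this is automatic, since a derivation of a free Lie algebra is determined freely by its values on generators) and then that $\partial_s$ kills the defining relators \eqref{Lrel1}, \eqref{Lrel2.1}, \eqref{Lrel2.2}, so that it descends to a derivation of the quotient $\mc{L}$. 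Once we know each $\partial_s$ is a well-defined derivation of $\mc{L}$, the last thing to check is the compatibility $[\partial_s,\partial_t]=\partial_{[x_s,x_t]}$ as derivations of $\mc{L}$; since $\mf{g}$ is free on $x_1,\dots,x_m$, checking this on the generators $a_i$ and $b_{i,j}$ of $\mc{L}$ suffices, and it then propagates to all of $\mc{L}$ because both sides are derivations.

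For the well-definedness on relators, the natural strategy is to \emph{not} compute blindly but to use the intended isomorphism $\mc{L}\simeq L(\mf{g})$ as a guide: under the identification $a_i\leftrightarrow \tilde a_i = x_i - x_i^{\psi}$ and $b_{i,j}\leftrightarrow \tilde b_{i,j} = [x_i,x_j]-[x_i,x_j]^{\psi}$, the claimed action should be exactly the restriction to $L(\mf{g})$ of the adjoint action of $\mf{g}\subseteq\chi(\mf{g})$ (conjugation by $x_s$), because $\chi(\mf{g})\simeq L\rtimes\mf{g}$. Indeed $[x_s,\tilde a_i]=[x_s,x_i]-[x_s,x_i^{\psi}]=[x_s,x_i]-[x_s^{\psi},x_i^{\psi}]$ using $[x,y^{\psi}]=[x^{\psi},y]$ from Lemma~\ref{3id}'s proof, and one can rewrite $[x_s,x_i]-[x_s^{\psi},x_i^{\psi}]$ in terms of $\tilde b_{s,i}$ and $[\tilde a_s,\tilde a_i]$; carrying this out gives precisely the factor $\tfrac12$ appearing in the statement (the $\tfrac12$ is where $\mathrm{char}\,K\neq 2$ enters). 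So the formulas are literally "adjoint action of $x_s$ restricted to $L$," which is a derivation of $L$, and the relators of $\mc{L}$ hold in $L$, so $\partial_s$ annihilates them. One still has to verify this honestly with the presentation of $\mc{L}$ in hand, but the computation is guided: apply $\partial_s$ to each relator, expand using the derivation property, and collapse using \eqref{Lrel1}--\eqref{Lrel2.2} themselves (each relator of $\mc{L}$ is a consequence of Lemma~\ref{3id}, so its $\partial_s$-image will again be such a consequence).

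The compatibility $[\partial_s,\partial_t]a_i = \partial_{[x_s,x_t]}a_i$ is a finite calculation: the left side is $\partial_s\partial_t a_i - \partial_t\partial_s a_i$, and expanding $\partial_t a_i = \tfrac12([a_t,a_i]+b_{t,i})$ and then applying $\partial_s$ (using the derivation property) produces a combination of brackets of $a$'s and $b$'s; the difference with the $s,t$-swapped version should, after using the Jacobi identity and relator \eqref{Lrel1}, reduce to $\partial_{[x_s,x_t]}a_i$, where the right side is computed by writing $[x_s,x_t]$ as the corresponding element and using that the action of a bracket $[x_s,x_t]$ on $a_i$ is, via the $L\rtimes\mf{g}$ picture, $\mathrm{ad}([x_s,x_t])$ restricted to $L$. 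One does the analogous check on $b_{i,j}$. The main obstacle, and the only real work, is exactly the relator-annihilation step for \eqref{Lrel2.1} and \eqref{Lrel2.2}: these relators have arbitrary odd length $n\ge 3$ and arbitrary choices of the $u_i$, so applying the derivation $\partial_s$ term-by-term produces many summands, and one must organize the bookkeeping so that everything visibly collapses back into an instance (or a linear combination of instances) of the same family of relators. The cleanest way to control this is to keep working at the level of $L(\mf{g})\subseteq\chi(\mf{g})$, where every expression $[v_1-v_1^{\psi},\dots]$ of odd length collapses to a single $w-w^{\psi}$ by Lemma~\ref{3id}, so that the identities to be verified become genuinely trivial identities in $\chi(\mf{g})$; translating that collapse back into a purely formal argument about the presentation of $\mc{L}$ is the bulk of the proof.
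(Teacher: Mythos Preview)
Your plan matches the paper's: lift the formulas to derivations $D_s$ of the free Lie algebra on $\{a_i, b_{i,j}\}$, then verify that each $D_s$ maps every defining relator into the ideal generated by the relators. (A small imprecision: $D_s$ does not literally ``kill'' the relators; $D_s(r)$ is typically a nonzero element of the relator ideal. The paper computes this explicitly for each family, finding for instance $D_s(r_{21}-r_{22}) \equiv \tfrac12[a_s, r_{21}-r_{22}]$ modulo relators.)

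Where you diverge from the paper is in proposing a compatibility check $[\partial_s, \partial_t] = \partial_{[x_s, x_t]}$. This step is unnecessary: since $\mf{g}$ is \emph{free} on $x_1, \ldots, x_m$, any assignment $x_s \mapsto D_s \in \mathrm{Der}(\mc{L})$ extends uniquely to a Lie algebra homomorphism $\mf{g} \to \mathrm{Der}(\mc{L})$, with no relations to verify. The paper's final sentence is exactly this observation. Your proposed comparison would only be meaningful if there were an independent definition of $\partial_{[x_s,x_t]}$ to compare against, and you appeal to the $L\rtimes\mf{g}$ picture for one --- but that picture is precisely what this lemma is building toward (Theorem~\ref{Thm1}), so invoking it here as more than a heuristic would be circular. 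Drop the compatibility step and your outline coincides with the paper's proof.
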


\begin{proof}
Let $F$ be the free Lie algebra on $a_i$, $b_{i,j}$, for $1 \leq i < j \leq m$. It is clear that the formulas above give well-defined derivations 
$D_s: F \to F$. To see that they induce derivations $D_s: \mc{L} \to \mc{L}$ we need to verify that the relations are respected.

First notice that
\[ D_s( [a_i, a_j, b_{k,l}]) = \frac{1}{2} ([a_s, a_i, a_j, b_{k,l}] + [b_{s,i},a_j,b_{k,l}] + [a_i, b_{s,j}, b_{k,l}] + [a_i, a_j, a_s, a_k, a_l]).\]
Modulo relators of type \eqref{Lrel2.1} we have
\[ [b_{s,i},a_j,b_{k,l}] = [ [a_s,a_i],a_j, a_k,a_l]\]
and
\[[a_i, b_{s,j}, b_{k,l}] = [ a_i, [a_s,a_j], a_k,a_l],\]
thus
\[ D_s( [a_i, a_j, b_{k,l}]) = \frac{1}{2} ([a_s, a_i, a_j, b_{k,l}] + [a_s, a_i,a_j, a_k, a_l]).\]
It becomes clear from this expression that for 
\[ r = [a_i, a_j, b_{k,l}] - [a_i, b_{j,k}, a_l] - [a_i, a_k, b_{j,l}] \in F,\]
the image $D_s(r)$ is a consequence of the defining relators of $\mc{L}$.

Now let 
\[r_{21} = [u_1, \ldots, b_{j,j'}, \ldots, b_{k,k'}, \ldots, u_n]\]
\[r_{22} = [u_1, \ldots, [a_j,a_{j'}], \ldots, [a_k,a_{k'}], \ldots, u_n],\]
with $n>1$ odd. We need to verify that $D_s( r_{21}-r_{22})$ is a consequence of the defining relators of $\mc{L}$. 
We compute each $D_s(r_{2i})$ by applying the derivation property. Notice that
\begin{dmath*}
 D_s(r_{21}) - \frac{1}{2}[a_s, r_{21}] = \frac{1}{2}([u_1, \ldots, [a_s, a_j,a_{j'}], \ldots, b_{k,k'}, \ldots, u_n] + [u_1, \ldots, b_{j,j'}, \ldots, [a_s, a_k, a_{k'}], \ldots, u_n] + A),
 \end{dmath*}
where $A$ is the sum of the terms \[[u_1, \ldots, D_s(u_i)-[a_s,u_i], \ldots, b_{j,j'}, \ldots, b_{k,k'}, \ldots, u_n]\] with $i \neq j, k$.
Similarly we have:
\begin{dmath*}
 D_s(r_{22}) - \frac{1}{2}[a_s, r_{22}] = \frac{1}{2}([u_1, \ldots, [b_{s,j},a_{j'}] + [a_j,b_{s,j'}], \ldots, [a_k,a_k'], \ldots, u_n] + [u_1, \ldots, [a_j,a_{j'}], \ldots, [b_{s,k}, a_{k'}]+ [a_k,b_{s,k'}], \ldots, u_n] + B),
\end{dmath*}
where $B$ is the sum of the terms \[[u_1, \ldots, D_s(u_i)-[a_s,u_i], \ldots [a_j,a_{j'}], \ldots, [a_k,a_{k'}], \ldots, u_n]\] with $i \neq j, k$. 

Now notice that each of the terms of $D_s(r_{22}) - \frac{1}{2}[a_s, r_{22}]$ can be transformed into the respective term of $D_s(r_{21}) - \frac{1}{2}[a_s, r_{21}]$
modulo a relator of type \eqref{Lrel2.1} or \eqref{Lrel2.2} (or that after an application of the Jacobi identity). We recall that $n$ is an odd integer to begin with, 
so all  
brackets have length $n$ or $n+2$ and the application of the defining relators is licit. Thus
\[ D_s( r_{21}-r_{22}) \equiv \frac{1}{2}[a_s, r_{21}-r_{22}] \equiv 0,\]
where the congruence is taken modulo the defining relators of $\mc{L}$. The proof for a relator of type \eqref{Lrel2.2} is completely analogous,
so we omit.

Finally, since each $D_s: \mc{L} \to \mc{L}$ is a well defined derivation and $\mf{g}$ is free, the association $x_s \mapsto D_s$ defines a Lie algebra
homomorphism $\mf{g} \to Der(\mc{L})$, that is, an action of $\mf{g}$ on $\mc{L}$.
\end{proof}

We will abandon the notation $D_s$ for these derivations; we will simply write $x_s \cdot \ell$ to denote the action of $D_s$ on some $\ell \in \mc{L}$.
We may consider the 
semi-direct product $\mc{L} \rtimes \mf{g}$ defined with respect to this action, so that $x_s \cdot \ell$ is identified with $[x_s,\ell]$ for all $\ell \in \mc{L}$.

In order to prove that $\mc{L} \simeq L(\mf{g})$, we first deduce some formulas for the action of $\mf{g}$ on $\mc{L}$. For instance, we have this nice 
formula for the action of long right-normed brackets involving the generators of $\mf{g}$ on the generators of $\mc{L}$.
\begin{lemma}  \label{L35}
For all $n \geq 2$ and for any $u \in \{a_i, b_{i,j}\}$ we have:
\[ [x_{i_1}, \ldots, x_{i_n}] \cdot u = -\frac{1}{2} ( [u, a_{i_1}, \ldots, a_{i_n}] + [u, a_{i_1}, \ldots, a_{i_{n-2}}, b_{i_{n-1},i_n}])\]
\end{lemma}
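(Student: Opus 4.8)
The plan is to induct on $n$, using the recursive definition of right-normed brackets together with the formulas for the action given in Lemma \ref{actionbrac}. The base case $n=2$ asks that $[x_{i_1},x_{i_2}]\cdot u = -\frac{1}{2}([u,a_{i_1},a_{i_2}] + [u,b_{i_1,i_2}])$. To check this I would write $[x_{i_1},x_{i_2}]\cdot u = x_{i_1}\cdot(x_{i_2}\cdot u) - x_{i_2}\cdot(x_{i_1}\cdot u)$, expand each inner action by Lemma \ref{actionbrac} (splitting into the two cases $u=a_i$ and $u=b_{i,j}$), and then simplify using the Jacobi identity. The terms of the form $[a_{i_1},a_{i_2},u]$-type pieces should cancel or combine, and one uses $b_{i_2,i_1} = -b_{i_1,i_2}$; the relators \eqref{Lrel1}--\eqref{Lrel2.2} of $\mc{L}$ will be invoked to reconcile mixed brackets of the shape $[a_{i_1},b_{i_2,\ast}]$ with $[a_{i_1},a_{i_2},a_\ast]$. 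This base case is essentially the same computation already performed inside the proof of Lemma \ref{actionbrac} when verifying that $D_s$ respects the relators, so it should go through without surprises.

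For the inductive step, suppose the formula holds for $n$ and consider $[x_{i_0},x_{i_1},\ldots,x_{i_n}]\cdot u = x_{i_0}\cdot\bigl([x_{i_1},\ldots,x_{i_n}]\cdot u\bigr)$, since $[x_{i_0},x_{i_1},\ldots,x_{i_n}] = [x_{i_0},[x_{i_1},\ldots,x_{i_n}]]$ and the action of a bracket $[v,w]$ on $\ell$ is $v\cdot(w\cdot\ell)-w\cdot(v\cdot\ell)$; but here the second term involves $[x_{i_1},\ldots,x_{i_n}]\cdot(x_{i_0}\cdot u)$, which is not directly covered by the inductive hypothesis because $x_{i_0}\cdot u$ is a bracket in $\mc{L}$, not a single generator. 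So instead I would phrase the induction so as to act on the left: write $[x_{i_1},\ldots,x_{i_n}]\cdot u = x_{i_1}\cdot\bigl([x_{i_2},\ldots,x_{i_n}]\cdot u\bigr)$, apply the inductive hypothesis to the inner term (replacing $n$ by $n-1$), and then apply $x_{i_1}\cdot(-)$ using the derivation property together with the two formulas of Lemma \ref{actionbrac}. The right-hand side of the hypothesis is $-\frac{1}{2}([u,a_{i_2},\ldots,a_{i_n}] + [u,a_{i_2},\ldots,a_{i_{n-2}},b_{i_{n-1},i_n}])$; acting by $x_{i_1}$ produces, term by term, an $\frac{1}{2}[a_{i_1},-]$ contribution plus contributions where $x_{i_1}$ hits each letter $a_{i_k}$ or $b_{i_{n-1},i_n}$ or the leading $u$.

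The main obstacle will be organizing this telescoping cleanly: acting by $x_{i_1}$ on $[u,a_{i_2},\ldots,a_{i_n}]$ creates many terms (one for $u$, one for each $a_{i_k}$), and one must show that all of them except the two terms in the claimed formula cancel against the corresponding terms coming from the second summand and from the $\frac{1}{2}[a_{i_1},-]$ pieces, after repeated use of the Jacobi identity and the relators \eqref{Lrel1}, \eqref{Lrel2.1}, \eqref{Lrel2.2}. The key structural fact making this work is the same one exploited in Lemma \ref{relL2}: modulo the relators, a bracket $b_{p,q}$ can be freely exchanged with $[a_p,a_q]$ inside a right-normed bracket of odd length (and, via \eqref{Lrel2.2}, a trailing pair $a_{p},a_{q}$ with $b_{p,q}$), so the apparently different-looking terms are in fact equal in $\mc{L}$. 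I would do the $n=2$ and $n=3$ cases explicitly to pin down the sign bookkeeping, and then present the general step as a formal computation citing \eqref{Lrel1}--\eqref{Lrel2.2} and Jacobi at each place a rearrangement is needed.
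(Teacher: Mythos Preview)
Your inductive step contains a genuine gap. You correctly note that $[v,w]\cdot\ell = v\cdot(w\cdot\ell)-w\cdot(v\cdot\ell)$ and that the term $[x_{i_1},\ldots,x_{i_n}]\cdot(x_{i_0}\cdot u)$ is problematic because $x_{i_0}\cdot u$ is not a generator. But your proposed fix, to ``phrase the induction so as to act on the left'' and write $[x_{i_1},\ldots,x_{i_n}]\cdot u = x_{i_1}\cdot\bigl([x_{i_2},\ldots,x_{i_n}]\cdot u\bigr)$, does not help: this equation is simply false (you have again dropped the second summand of the commutator), and the missing term $[x_{i_2},\ldots,x_{i_n}]\cdot(x_{i_1}\cdot u)$ suffers from exactly the defect you identified a line earlier. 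Re-indexing cannot make the second summand of a commutator disappear, so the computation you outline in the last two paragraphs is only half of what needs to be done.

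The observation you are missing is that the inductive hypothesis already says the derivation associated to $[x_{i_2},\ldots,x_{i_n}]$ is \emph{inner}. Indeed, the formula $[x_{i_2},\ldots,x_{i_n}]\cdot u = [u,c]$, with $c=-\tfrac{1}{2}\bigl([a_{i_2},\ldots,a_{i_n}]+[a_{i_2},\ldots,a_{i_{n-2}},b_{i_{n-1},i_n}]\bigr)$, holds for every generator $u$, hence (since both sides are derivations in $u$) for every $u\in\mc{L}$. Once this is noted, the commutator of $D_{i_1}$ with the inner derivation $\ell\mapsto[\ell,c]$ is again inner, namely $\ell\mapsto[\ell,\,x_{i_1}\cdot c]$; so $[x_{i_1},\ldots,x_{i_n}]\cdot u=[u,\,x_{i_1}\cdot c]$ for all $u$, and the entire inductive step reduces to computing the single element $x_{i_1}\cdot c$ via Lemma~\ref{actionbrac} and simplifying with the relators \eqref{Lrel1}--\eqref{Lrel2.2}. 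This is precisely the route the paper takes, and it replaces your proposed term-by-term telescoping (which, as written, accounts for only one of the two commutator summands) with a single clean computation.
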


\begin{proof}
This can be proved by induction on $n$. The formulas are easily verified for $n=2$. For $n>2$ we use the following fact: if $\alpha$ and $\beta$ are derivations of a Lie algebra $L$, and $\beta(x) = [x,b]$ for some $b \in L$ and for all $x \in L$ (that is, $\beta$ is inner), then $\alpha(x) = [x,\alpha(b)]$ for all $x \in L$.

In order to use the induction hypothesis, we write
\[ [x_{i_1}, \ldots, x_{i_n}] = [x_{i_1}, [x_{i_2}, \ldots, x_{i_n}]].\]
The derivation associated to  $[x_{i_2}, \ldots, x_{i_n}]$ is inner and the multiplying element is given by the statement. Thus 
\[ [x_{i_1}, \ldots, x_{i_n}] \cdot u =[u, x_{i_1} \cdot ([a_{i_2}, \ldots, a_{i_n}] + [a_{i_2}, \ldots, a_{i_{n-2}}, b_{i_{n-1},i_n}])]
   \]
for all $u$.

By computing via the derivation property, we actually obtain
\[x_{i_1} \cdot ([a_{i_2}, \ldots, a_{i_n}] + [a_{i_2}, \ldots, a_{i_{n-2}}, b_{i_{n-1},i_n}]) = [a_{i_1}, \ldots, a_{i_n}] + [a_{i_1}, \ldots, a_{i_{n-2}}, b_{i_{n-1},i_n}] + s\]
modulo the defining relators of $\mc{L}$, where $s$ in an element such that $[u,s]$ is a defining relator of $\mc{L}$ for all $u \in \{a_i, b_{i,j}\}$ (in other words, $s$ is central). This gives the desired result.
\end{proof}


Analogously to Lemma \ref{3id}, we have a nice way of writing some long brackets of $\mc{L}$.

\begin{lemma}  \label{L36}
 We have:
 \[
 [x_{i_1}-a_{i_1}, \ldots, x_{i_n}-a_{i_n}]=  
     \begin{cases}
       [x_{i_1}, \ldots, x_{i_n}] - [a_{i_1}, \ldots, a_{i_n}] &\quad\text{if n is odd},\\
       [x_{i_1}, \ldots, x_{i_n}] - [a_{i_1}, \ldots, a_{i_{n-2}}, b_{i_{n-1}, i_n}]   &\quad\text{otherwise.} \\ 
     \end{cases}
\]
\end{lemma}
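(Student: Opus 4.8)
The plan is to prove this by induction on $n$, in the style of the proof of Lemma~\ref{L35} and working throughout in $\mc{L}\rtimes\mf{g}$. For $n=2$ one expands $[x_{i_1}-a_{i_1},x_{i_2}-a_{i_2}]$ into four brackets, evaluates $[x_{i_1},a_{i_2}]=x_{i_1}\cdot a_{i_2}$ and $[a_{i_1},x_{i_2}]=-x_{i_2}\cdot a_{i_1}$ by the formulas of Lemma~\ref{actionbrac}, and checks that the two $[a_{i_1},a_{i_2}]$-terms cancel, leaving $[x_{i_1},x_{i_2}]-b_{i_1,i_2}$, which is the even branch. For $n>2$ I would split off the first factor,
\[
 [x_{i_1}-a_{i_1},\ldots,x_{i_n}-a_{i_n}]=\bigl[x_{i_1}-a_{i_1},\,[x_{i_2}-a_{i_2},\ldots,x_{i_n}-a_{i_n}]\bigr],
\]
feed the inductive hypothesis into the inner bracket --- so it becomes $[x_{i_2},\ldots,x_{i_n}]-c$, with $c\in\mc{L}$ equal to $[a_{i_2},\ldots,a_{i_n}]$ or to $[a_{i_2},\ldots,a_{i_{n-2}},b_{i_{n-1},i_n}]$ according to the parity of $n-1$ --- and expand. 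Using that $\mf{g}$ acts on $\mc{L}$ and that $[a_{i_1},z]=-z\cdot a_{i_1}$ for $z\in\mf{g}$, this gives
\[
 [x_{i_1}-a_{i_1},\ldots,x_{i_n}-a_{i_n}]=[x_{i_1},\ldots,x_{i_n}]-x_{i_1}\cdot c+[x_{i_2},\ldots,x_{i_n}]\cdot a_{i_1}+[a_{i_1},c].
\]
Applying the projection $\mc{L}\rtimes\mf{g}\to\mf{g}$ identifies the first summand with the claimed $\mf{g}$-component, so it remains to control the three $\mc{L}$-terms.

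Set $P=[a_{i_1},a_{i_2},\ldots,a_{i_n}]$ and $Q=[a_{i_1},a_{i_2},\ldots,a_{i_{n-2}},b_{i_{n-1},i_n}]$. Lemma~\ref{L35} gives $[x_{i_2},\ldots,x_{i_n}]\cdot a_{i_1}=-\tfrac12(P+Q)$, while $[a_{i_1},c]$ is just $P$ or $Q$ (prepend $a_{i_1}$ to $c$), so the statement reduces to the single identity
\[
 x_{i_1}\cdot c=\tfrac12\,(P+Q),
\]
which I claim holds both when $c$ is a string of $a$'s and when $c$ ends in a $b$: granting it, the three $\mc{L}$-terms add to $-(P+Q)+[a_{i_1},c]$, which is $-Q$ for $n$ even and $-P$ for $n$ odd, as required. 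To prove this identity I would compute $x_{i_1}\cdot c$ by the derivation property and the formulas of Lemma~\ref{actionbrac}. The contributions in which the derivation spawns an inner bracket $[a_{i_1},a_{i_k}]$ recombine, by the derivation property of $\operatorname{ad}(a_{i_1})$, into $\tfrac12[a_{i_1},c]$; the remaining contributions --- each a right-normed bracket carrying one extra letter $b_{i_1,\ast}$, plus (when $c$ ends in a $b$) one term with that $b$ replaced by $[a_{i_1},a_{i_{n-1}},a_{i_n}]$ --- are pushed to normal form by the defining relators: one uses \eqref{Lrel2.2} on the interior summands and \eqref{Lrel1} on the two summands coming from the tail of $c$ when $c$ is a string of $a$'s, and \eqref{Lrel2.1} throughout when $c$ ends in a $b$. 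After the dust settles this contributes $\tfrac12 Q$ in the first case and $\tfrac12 P$ in the second, giving the identity. (For the shortest case, $n=3$, one has $c=b_{i_2,i_3}$ and $x_{i_1}\cdot c=\tfrac12(P+Q)$ is read off directly from Lemma~\ref{actionbrac}.)

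The part I expect to be the main obstacle is precisely this last normalization. The relators \eqref{Lrel2.1} and \eqref{Lrel2.2} are available only for right-normed brackets of odd length, so one has to keep track of the lengths that arise --- the brackets coming out of $x_{i_1}\cdot c$ have length $n-1$ (if $c$ is a string of $a$'s) or $n-2$ (if $c$ ends in a $b$), which is odd in the case at hand --- and one relies repeatedly on the fact that a relator may be invoked inside a longer right-normed bracket because the ideal of relators is invariant under $\operatorname{ad}(w)$. The genuinely delicate point is the ``boundary'': the summands in which the extra letter $b_{i_1,\ast}$ lands in one of the last two slots of a bracket do not match a relator verbatim, and it is there that \eqref{Lrel1} --- or, equivalently, a short application of the Jacobi identity --- has to be brought in to reshape them into the form handled by \eqref{Lrel2.1}--\eqref{Lrel2.2}. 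Running the odd and even cases of $n$ side by side keeps the bookkeeping from branching, and the whole argument is a longer but structurally faithful cousin of the verification in the proof of Lemma~\ref{actionbrac}.
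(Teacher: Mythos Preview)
Your proposal is correct and follows essentially the same approach as the paper's own proof: induction on $n$, splitting off the first factor, invoking Lemma~\ref{L35} for the term $[a_{i_1},[x_{i_2},\ldots,x_{i_n}]]$, and evaluating $x_{i_1}\cdot c$ via the derivation property together with relations \eqref{Lrel1}, \eqref{Lrel2.1}, \eqref{Lrel2.2}. The only cosmetic difference is that you isolate the identity $x_{i_1}\cdot c=\tfrac12(P+Q)$ explicitly, whereas the paper carries out the equivalent computation inline (its equations (T1)--(T3)); your parity bookkeeping and your handling of the ``boundary'' summands via \eqref{Lrel1} match the paper's treatment exactly.
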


\begin{proof}
 We prove it by induction on $n$. If $n=1$ this is clear. If $n>1$ is even, then by induction hypothesis we have
 \[ [x_{i_1}-a_{i_1}, \ldots, x_{i_n}-a_{i_n}]  = [x_{i_1}-a_{i_1}, [x_{i_2}, \ldots, x_{i_n}] - [a_{i_2}, \ldots, a_{i_n}]].\]
By Lemma \ref{L35} we have
\begin{equation}  \label{T1}
 -[a_{i_1}, [x_{i_2}, \ldots, x_{i_n}]] = -\frac{1}{2}([a_{i_1}, \ldots, a_{i_n}] +  [a_{i_1}, \ldots, a_{i_{n-2}}, b_{i_{n-1}, i_n}]).
\end{equation}
Also:
\begin{equation}  \label{T2}
 -[x_{i_1}, [a_{i_2}, \ldots, a_{i_n}]] = -\frac{1}{2}([a_{i_1}, \ldots, a_{i_n}] + \sum_{j=2}^n[a_{i_2}, \ldots, b_{i_1,i_j}, \ldots, a_{i_n}]). 
\end{equation}
By relations \ref{Lrel1} and \ref{Lrel2.2} it follows that 
\[\sum_{j=2}^n[a_{i_2}, \ldots, b_{i_1,i_j}, \ldots, a_{i_n}] = [a_{i_1}, a_{i_2}, \ldots, a_{i_{n-2}}, b_{i_{n-1}, i_n}].\]
Finally, by summing \eqref{T1} and \eqref{T2}, we get
\[ [x_{i_1}-a_{i_1}, \ldots, x_{i_n}-a_{i_n}] = [x_{i_1}, \ldots, x_{i_n}] - [a_{i_1}, \ldots, a_{i_{n-2}}, b_{i_{n-1}, i_n}], \]
as we wanted. 

If $n$ is odd we have similarly
 \[ [x_{i_1}-a_{i_1}, \ldots, x_{i_n}-a_{i_n}]  = [x_{i_1}-a_{i_1}, [x_{i_2}, \ldots, x_{i_n}] - [a_{i_2}, \ldots, a_{i_{n-2}}, b_{i_{n-1},i_n}]].\]
This time we have:
\begin{dmath}  \label{T3}  
           -[x_{i_1}, [a_{i_2}, \ldots, a_{i_{n-2}}, b_{i_{n-1},i_n}]] = -\frac{1}{2}( [a_{i_1}, \ldots, a_{i_{n-2}}, b_{i_{n-1}, i_n}] + 
           \sum_{j=1}^{n-2}[a_{i_2}, \ldots, b_{i_1, i_j}, \ldots a_{i_{n-2}}, b_{i_{n-1},i_n}]  + [a_{i_2}, \ldots, a_{i_{n-2}}, a_{i_1}, a_{i_{n-1}}, a_{i_n}])
\end{dmath}
Now, by relations \eqref{Lrel2.1} we get
\[
\sum_{j=1}^{n-2}[a_{i_2}, \ldots, b_{i_1, i_j}, \ldots a_{i_{n-2}}, b_{i_{n-1},i_n}]  + [a_{i_2}, \ldots, a_{i_{n-2}}, a_{i_1}, a_{i_{n-1}}, a_{i_n}] = [a_{i_1}, \ldots, a_{i_n}].
\]
Thus by \eqref{T1} and \eqref{T3} we get the result. The proof is complete.
\end{proof}

 Let $\mc{M}$ be the set of right-normed brackets involving $x_1, \ldots, x_m$.  For $u = [x_{i_1}, \ldots, x_{i_n}] \in \mc{M}$, with $n \geq 2$,
 denote
 \[ \mu(u) =  
     \begin{cases}
       [a_{i_1}, \ldots, a_{i_n}] &\quad\text{if n is odd},\\
       [a_{i_1}, \ldots, a_{i_{n-2}}, b_{i_{n-1}, i_n}]   &\quad\text{otherwise.} \\ 
     \end{cases}
 \]
 Similarly, let:
  \[ \xi(u) =  
     \begin{cases}
       [a_{i_1}, \ldots, a_{i_{n-2}}, b_{i_{n-1}, i_n}]   &\quad\text{if n is odd},\\
       [a_{i_1}, \ldots, a_{i_n}] &\quad\text{otherwise.} \\ 
     \end{cases}
 \]
 By Lemma \ref{L35}, for $u = [x_{i_1}, \ldots, x_{i_n}]$ with $n \geq 2$, we have:
 \[ u \cdot \ell = -\frac{1}{2}[\ell, \mu(u) + \xi(u)]\]
 for all $\ell \in \mc{L}$.

\begin{rem}  \label{relchi}
 If $\mf{g}$ is free on $x_1, \ldots, x_m$, then by linearity a full set of relations for $\chi(\mf{g})$ is given by:
 \[ [u,v^{\psi}] = [u^{\psi}, v]\]
 for all right-normed brackets $u,v$ involving the generators $x_1, \ldots, x_m$ (including the case $u=v$).
\end{rem} 
 
\begin{theorem}  \label{Thm1}
Let $\mf{g}$ be free on the set $\{x_1, \ldots, x_m\}$. Then $\chi(\mf{g}) \simeq \mc{L} \rtimes \mf{g}$.
\end{theorem}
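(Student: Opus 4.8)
The plan is to construct mutually inverse homomorphisms $P\colon\mc{L}\rtimes\mf{g}\to\chi(\mf{g})$ and $Q\colon\chi(\mf{g})\to\mc{L}\rtimes\mf{g}$, using the splitting $\chi(\mf{g})\simeq L(\mf{g})\rtimes\mf{g}$ from Lemma~3.1 of \cite{Men} (with $\mf{g}$ the natural copy inside $\chi(\mf{g})$ and $L(\mf{g})=\ker\alpha$). The map $P$ is the easy one. By Lemmas~\ref{relL1} and \ref{relL2} the relators \eqref{Lrel1}, \eqref{Lrel2.1}, \eqref{Lrel2.2} are satisfied by $\tilde{a}_i,\tilde{b}_{i,j}$, so $a_i\mapsto\tilde{a}_i$, $b_{i,j}\mapsto\tilde{b}_{i,j}$ extends to a homomorphism $p\colon\mc{L}\to L(\mf{g})$, which is onto because these elements generate $L(\mf{g})$. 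To descend to the semidirect products I must check that $p$ is $\mf{g}$-equivariant, i.e.\ $p(x_s\cdot\ell)=[x_s,p(\ell)]$ in $\chi(\mf{g})$ for all $\ell$; both sides are derivations $\mc{L}\to L(\mf{g})$, so it is enough to check this on the generators $a_i$, $b_{i,j}$, and there a direct computation with the identities $[x,y^{\psi}]=[x^{\psi},y]$ and $[x-x^{\psi},[y,z^{\psi}]]=0$ from the proof of Lemma~\ref{3id} yields $[x_s,\tilde{a}_i]=\tfrac12(\tilde{b}_{s,i}+[\tilde{a}_s,\tilde{a}_i])$ and $[x_s,\tilde{b}_{i,j}]=\tfrac12([\tilde{a}_s,\tilde{a}_i,\tilde{a}_j]+[\tilde{a}_s,\tilde{b}_{i,j}])$, which are precisely the $p$-images of the formulas of Lemma~\ref{actionbrac}. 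Hence $P$ is a well-defined surjection, restricting to the identity on $\mf{g}$ and to $p$ on $\mc{L}$.

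For $Q$ I would set $x_i\mapsto x_i$ and $x_i^{\psi}\mapsto x_i-a_i$. Since $\mf{g}$ and $\mf{g}^{\psi}$ are free these extend to homomorphisms of the two copies, so by Remark~\ref{relchi} it suffices to check that each relator $[u,v^{\psi}]-[u^{\psi},v]$, with $u,v\in\mc{M}$, is killed in $\mc{L}\rtimes\mf{g}$. By Lemma~\ref{L36} one has $Q(w^{\psi})=w-\mu(w)$ for all $w\in\mc{M}$ (reading $\mu(x_i)=a_i$), so the image of the relator is
\[
[u,\,v-\mu(v)]-[u-\mu(u),\,v]\;=\;-[u,\mu(v)]-[v,\mu(u)]\;=\;-\bigl(u\cdot\mu(v)+v\cdot\mu(u)\bigr),
\]
and everything reduces to the identity $u\cdot\mu(v)+v\cdot\mu(u)=0$ in $\mc{L}$, for all $u,v\in\mc{M}$.

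Establishing this identity is the main obstacle. When $u$ and $v$ both have length at least $2$, Lemma~\ref{L35} turns the left-hand side into $-\tfrac12\bigl([\mu(v),\xi(u)]+[\mu(u),\xi(v)]\bigr)$, so one needs $[\mu(u),\xi(v)]=[\xi(u),\mu(v)]$; this is proved by repeatedly exchanging an occurrence of $b_{k,l}$ for $[a_k,a_l]$ inside a bracket (and, at its tail, $a,a$ for $b$) via the relators \eqref{Lrel2.1} and \eqref{Lrel2.2}, interleaved with the Jacobi identity, exactly in the spirit of the proof of Lemma~\ref{L36}; the recurring device is that carrying out such an exchange twice returns the negative of the element, so with $\mathrm{char}(K)\neq2$ the relevant combination vanishes. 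The cases where $u$ or $v$ has length $1$ are treated the same way, now also using Lemma~\ref{actionbrac} and the relator \eqref{Lrel1}. I expect this bookkeeping to be the most laborious step, though it is purely mechanical once the earlier lemmas are in hand.

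To finish, evaluate both compositions on generators. Both fix $\mf{g}$ pointwise; furthermore $Q(P(a_i))=Q(\tilde{a}_i)=x_i-(x_i-a_i)=a_i$, and $Q(P(b_{i,j}))=Q(\tilde{b}_{i,j})=[x_i,x_j]-[x_i-a_i,x_j-a_j]=b_{i,j}$ by Lemma~\ref{L36} with $n=2$, while $P(Q(x_i^{\psi}))=P(x_i-a_i)=x_i-\tilde{a}_i=x_i^{\psi}$. Therefore $P$ and $Q$ are mutually inverse, and $\chi(\mf{g})\simeq\mc{L}\rtimes\mf{g}$.
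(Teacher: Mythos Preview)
Your proof follows essentially the same route as the paper's: construct mutually inverse homomorphisms (the paper calls them $\sigma$ and $\theta$), verify well-definedness of the map into $\mc{L}\rtimes\mf{g}$ via Lemma~\ref{L36}, and reduce the relation check to the identity $[\mu(u),\xi(v)]=[\xi(u),\mu(v)]$, which the paper observes is directly an instance of \eqref{Lrel2.2} after opening the brackets as right-normed ones. Your explicit verification of the $\mf{g}$-equivariance of $p$ is a detail the paper subsumes in the phrase ``by choice, this is a well-defined surjective homomorphism''.
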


\begin{proof}
 Define \[ \sigma: \mc{L} \rtimes \mf{g} \to \chi(\mf{g})\]
 by \[ \sigma(x_i) = x_i, \hbox{   } \sigma(a_i) = x_i-x_i^{\psi}, \hbox{   } \sigma(b_{i,j}) = [x_i,x_j]-[x_i,x_j]^{\psi}.\]
 By choice, this is a well-defined surjective homomorphism of Lie algebras.
 
 Similarly, define
 \[ \theta: \chi(\mf{g}) \to \mc{L} \rtimes \mf{g}\]
 by
 \[ \theta(x_i) = x_i, \hbox{  } \theta(x_i^{\psi}) = x_i-a_i.\]
 We need show that $\theta$ is well-defined. We will verify that the relations as in Remark \ref{relchi} are preserved.
 
 Let $u,v \in \mc{M}$. If $u$ and $v$ have length $1$, say $u=x_i$ and $v=x_j$, we can see directly by the definition of the action of $\mf{g}$ on
 $\mc{L}$ that $[x_i, a_j] = -[a_i, x_j]$, which implies that $[\theta(u), \theta(v^{\psi})] = [\theta(u^{\psi}), \theta(v)]$.
 
 So suppose that $v$ has length at least $2$. By Lemma \ref{L36} we have
 \[ [\theta(u), \theta(v^{\psi})] = [u, v- \mu(v)] = [u,v] - [u,\mu(v)].\]
 If $u=x_t$ has length $1$, this reduces to $[x_t, v]- [x_t, \mu(v)]$ and we have on the other hand:
 \[ [\theta(u^{\psi}), \theta(v)] = [x_t-a_t, v] = [x_t,v] - [a_t, v].\]
 Now write $v = [x_{i_1}, \ldots, x_{i_n}]$. If $n$ is odd, we have:
 \[ [x_t, \mu(v)] = [x_t, [a_{i_1}, \ldots, a_{i_n}]] = \frac{1}{2}\sum_{j=1}^n [a_{i_1}, \ldots, [a_t, a_{i_j}]+ b_{t,i_j}, \ldots, a_{i_n}].\]  
 For $1 \leq j \leq n-2$, as a consequence of relation \eqref{Lrel2.2} we have
 \[ [a_{i_1}, \ldots, b_{t,i_j}, \ldots, a_{i_n}] = [a_{i_1}, \ldots, [a_t,a_{i_j}], \ldots, a_{i_{n-2}}, b_{i_{n-1}, i_n}].\]
 Also, by relation \eqref{Lrel1} we have:
 \[ [a_{i_1}, \ldots, a_{i_{n-2}},  b_{t,i_{n-1}}, a_{i_n}] + [a_{i_1}, \ldots, a_{i_{n-2}}, a_{i_{n-1}}, b_{t,i_n}] = [a_{i_1}, \ldots, a_{i_{n-2}}, a_t, b_{i_{n-1}, i_n}]\]
 Thus:
 \[ [x_t, \mu(v)] = \frac{1}{2}([a_t, a_{i_1}, \ldots, a_{i_n}] + [a_t, a_{i_1}, \ldots, a_{i_{n-2}}. b_{i_{n-1},i_n}]) = \frac{1}{2}[a_t, \mu(v) + \xi(v)]\]
 This coincides with the formula given on Lemma \ref{L35} for $[a_t,v]$, so $[\theta(x_i), \theta(v^{\psi})] = [\theta(x_i^{\psi}), \theta(v)]$.
 The same reasoning gives the result if $n$ is even, but we use the relations \eqref{Lrel2.1}.
 
 Finally, suppose that both $u$ and $v$ have length at least two. Since $\mu(v) \in \mc{L}$, we have by the comment above the theorem:
 \[ [\theta(u), \theta(v)^{\psi}] = [u,v] - [u,\mu(v)] = [u,v] + \frac{1}{2}[\mu(v), \mu(u)+ \xi(u)].\]
 By looking similarly at $[\theta(u^{\psi}), \theta(v)]$, we see that in order to show that the relation $[u,v^{\psi}]=[u^{\psi}, v]$ is preserved by
 $\theta$, we need only to verify that 
 \[ [\mu(u), \xi(v)] = [\xi(u), \mu(v)]\]
 for all $u,v$. But this is an instance of relation \eqref{Lrel2.2}, after opening up the brackets as right-normed ones (note that $[\mu(u),\xi(v)]$ is
 always a bracket of odd length). 
 
 Thus $\theta$ is well defined and clearly $\sigma \circ \theta = id$ and $\theta \circ \sigma = id$, that is, $\theta$ is an isomorphism of Lie algebras.
\end{proof}

 By restriction we get an isomorphism $L(\mf{g}) \simeq \mc{L}$.
 
 In general, if $\mf{h} = \mf{g}/N$, then $\chi(\mf{g})$ is the quotient of $\mc{L} \rtimes \mf{g}$ by the ideal generated by $\theta(N \cup N^{\psi})$, where $\theta$ is the homomorphism defined in the above proof. Clearly $\theta(N)$ generates a copy of $N$ inside $\mf{g}$.  On the other hand, if $r \in N$, then 
 \[\theta(r^{\psi}) = r - \mu(r),\]
 where $\mu$ is extended by linearity (or $\mu: \mf{g} \to \mc{L} \rtimes \mf{g}$ is \textit{defined} as $\mu = inc - \theta |_{\mf{g}^{\psi}}$, where $inc: \mf{g} \to \mc{L} \rtimes \mf{g}$ is the obvious inclusion). 
 It follows that $\chi(\mf{h}) = \mc{L}/J \rtimes \mf{h}$, where $J$ is the ideal of $\chi(\mf{g})$ generated by $\mu(r)$, for all $r \in N$. This gives implicitly a presentation for $L(\mf{h})$.

 This presentation of $L(\mf{g})$ is of course infinite. For a non-abelian free Lie algebra $\mf{g}$ in fact $L(\mf{g})$ does not admit a finite presentation.
 
 \begin{prop} \label{H2Linfinite}
If $\mf{g}$ is free non-abelian, then $L(\mf{g})$ does not admit a finite presentation.
\end{prop}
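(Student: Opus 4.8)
The plan is to prove that $H_2(L(\mf{g});K)$ is infinite-dimensional; since $L(\mf{g})$ is finitely generated (by the elements $\tilde{a}_i,\tilde{b}_{i,j}$ of Section~\ref{presL}) and a finitely presented Lie algebra has finite-dimensional second homology with trivial coefficients (see the Preliminaries), this suffices. First I would reduce to the case $\mathrm{rk}\,\mf{g}=2$. The construction $\chi(-)$ is functorial, and a homomorphism $\phi\colon\mf{g}\to\mf{h}$ induces $\chi(\phi)$ carrying $L(\mf{g})$ into $L(\mf{h})$, because $\alpha_{\mf{h}}\circ\chi(\phi)=\phi\circ\alpha_{\mf{g}}$. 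Applying this to the inclusion $\mf{g}_2\hookrightarrow\mf{g}_m$ of the free algebra on two of the free generators of $\mf{g}=\mf{g}_m$ and to the retraction $\mf{g}_m\twoheadrightarrow\mf{g}_2$ killing the remaining generators, we obtain $L(\mf{g}_2)\to L(\mf{g}_m)\to L(\mf{g}_2)$ with composite the identity; hence $H_2(L(\mf{g}_2);K)$ is a direct summand of $H_2(L(\mf{g}_m);K)$, and it suffices to handle $\mf{g}$ free of rank $2$.

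So assume $\mf{g}$ is free of rank $2$. Then $W(\mf{g})=0$ (\cite{Men}, Theorem~1.6 and Proposition~7.3), so, using $[L,D]=0$ and $W=L\cap D$, the homomorphism $\beta$ identifies $L(\mf{g})$ with $\mc{K}:=\beta(L)=\{(a,b)\in\mf{g}\oplus\mf{g}\mid a+b\in[\mf{g},\mf{g}]\}$, which is the fibre product of the two maps $\mf{g}\to\mf{g}/[\mf{g},\mf{g}]=K^2$ given by $x\mapsto\bar{x}$ and $x\mapsto-\bar{x}$. I would then compute $H_2(\mc{K};K)$ with the Lyndon--Hochschild--Serre spectral sequence of the extension $[\mf{g},\mf{g}]\oplus[\mf{g},\mf{g}]\rightarrowtail\mc{K}\twoheadrightarrow K^2$. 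Since $[\mf{g},\mf{g}]$ is a free Lie algebra its homology vanishes above degree $1$, so by Künneth $H_q([\mf{g},\mf{g}]\oplus[\mf{g},\mf{g}];K)=0$ for $q\geq3$ and equals $M\otimes_K M$ for $q=2$, where $M=[\mf{g},\mf{g}]^{\mathrm{ab}}$; thus the spectral sequence is concentrated in the rows $q=0,1,2$, and $H_2(\mc{K};K)$ is filtered by the groups $E^\infty_{2,0}$, $E^\infty_{1,1}$, $E^\infty_{0,2}$.

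The crux is that $E^\infty_{0,2}$ is infinite-dimensional. For $\mf{g}$ free of rank $2$ the derived subalgebra of the free metabelian Lie algebra of rank $2$ is free of rank $1$ as a $\mc{U}(K^2)=K[t_1,t_2]$-module, i.e.\ $M\cong K[t_1,t_2]$; and on $M\otimes_K M$ the $K^2$-action is the tensor of the action on the two factors with the second factor twisted by $-1$ (this is where the two defining maps of $\mc{K}$ differ by a sign), so $t_i$ acts as $t_i\otimes1-1\otimes t_i$. Hence $E^2_{0,2}=H_0(K^2;M\otimes_K M)$ is the quotient of $K[t_1,t_2]\otimes_K K[t_1,t_2]$ by the ideal generated by $t_1\otimes1-1\otimes t_1$ and $t_2\otimes1-1\otimes t_2$, which is a polynomial ring in two variables and so infinite-dimensional. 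Because the $(-1)$-twisted copy of $K[t_1,t_2]$ is still a free $K[t_1,t_2]$-module, $E^2_{p,1}=0$ for $p\geq1$, and $E^2_{3,0}=\Lambda^3(K^2)=0$; therefore the differentials $d^2\colon E^2_{2,1}\to E^2_{0,2}$ and $d^3\colon E^3_{3,0}\to E^3_{0,2}$ vanish, no differential touches $E_{0,2}$, and $E^\infty_{0,2}=E^2_{0,2}$ is infinite-dimensional. Thus $H_2(L(\mf{g});K)=H_2(\mc{K};K)$ is infinite-dimensional, and by the first paragraph the same holds for every free non-abelian $\mf{g}$; this completes the proof.

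The step I expect to cost the most work is the module-theoretic bookkeeping in the third paragraph: verifying that $M=[\mf{g},\mf{g}]^{\mathrm{ab}}$ is free of rank one over $K[t_1,t_2]$ when $\mathrm{rk}\,\mf{g}=2$, that the $K^2$-action on the second Künneth factor is exactly the $(-1)$-twist of the standard one, and that $E^2_{p,1}=0$ for $p\geq1$ so that $E^2_{0,2}$ really survives to $E^\infty$; the remaining verifications (functoriality of $\chi$ and the retraction argument, the identification $L\cong\mc{K}$ when $W=0$) are formal.
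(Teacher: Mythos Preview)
Your argument is correct and takes a genuinely different route from the paper's. The paper works directly in arbitrary rank: it uses the extension $R \rightarrowtail L \twoheadrightarrow \rho(L)$, invokes \cite[Lemma~5.5]{Men} to know that $H_2(\rho(L);K)$ is infinite-dimensional, and then argues by contradiction via the $5$-term exact sequence and the LHS spectral sequence (using $cd(\rho(L))\leq 2$ to see that $E^2_{1,1}=\rho(L)^{ab}\otimes_K R$ survives). You instead reduce to rank $2$ by the retract argument, use that $W=0$ there so that $L\simeq\beta(L)=\mc{K}$, and compute $H_2(\mc{K};K)$ directly from the LHS spectral sequence of $[\mf{g},\mf{g}]^2\rightarrowtail\mc{K}\twoheadrightarrow K^2$; your identification $M\cong K[t_1,t_2]$ in rank $2$, the $(-1)$-twist on the second factor, and the vanishing of $E^2_{p,1}$ for $p\geq 1$ are all correct, so $E^2_{0,2}\cong K[t_1,t_2]$ really does survive to $E^\infty$. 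In effect you are reproving the rank-$2$ case of \cite[Lemma~5.5]{Men} from scratch and then transporting it upward; this makes your proof more self-contained and avoids the contradiction step, while the paper's approach treats all ranks uniformly without a reduction. It is worth noting that the remark the paper places right after its proof (``if $\mf{g}$ is free of rank $2$ the conclusion was more immediate: $W=R=0$, so $L\simeq\rho(L)$'') is precisely the starting point of your direct computation.

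One small comment: your opening remark that $L(\mf{g})$ is finitely generated by the $\tilde a_i,\tilde b_{i,j}$ is only true when $\mf{g}$ has finite rank; but this is harmless, since the implication ``finitely presentable $\Rightarrow$ $\dim H_2<\infty$'' already gives the contrapositive you need, and in infinite rank $L(\mf{g})$ is not even finitely generated.
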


\begin{proof}
 It suffices to show that $H_2(L(\mf{g});K)$ is infinite-dimensional. Suppose, on the contrary, that it is of finite dimension. Recall that $W/R \simeq H_2(\mf{g};K)$. In particular $W = R$ if $\mf{g}$ is free. By analyzing $R$ in terms of the generators of $\mc{L}$, it becomes clear that $R \subseteq [L,L]$. In particular, $L^{ab} \simeq \rho(L)^{ab}$. Now, the $5$-term exact sequence associated to the LHS spectral sequence arising from $R \rightarrowtail L \twoheadrightarrow \rho(L)$ reduces to
 \[ H_2(L;K) \to H_2(\rho(L); K) \to R \to 0.\]
 By \cite[Lemma~5.5]{Men}, the homology $H_2(\rho(L); K)$ is infinite-dimensional. Thus $R$ is infinite-dimensional (we are under the hypothesis that $H_2(L;K)$ is finite-dimensional).
 
 Now, consider the spectral sequence itself
 \[E_{p,q}^2 = H_p(\rho(L); H_q(R;K)) \Rightarrow H_{p+q}(L;K).\]
 Notice that $E_{1,1}^2= E_{1,1}^{\infty}$. Indeed, the differentials involved are $d_{1,1}: E_{1,1}^2 \to E_{-1,2}^2$ 
 and $d_{1,1}: E_{3,0}^2 \to E_{1,1}^2$. Clearly $E_{-1,2}^2=0$, but also $E_{3,0}^2 = H_3(\rho(L);K) = 0$, since 
 $\rho(L) \subseteq \mf{g} \oplus \mf{g}$ (thus $cd(\rho(L)) \leq 2$). Then $E_{1,1}^2=E_{1,1}^3= E_{1,1}^{\infty}$.
 But:
 \[ E_{1,1}^2= H_1( \rho(L); H_1(R;K)) \simeq \rho(L)^{ab} \otimes_K R,\]
 since $\rho(L)$ acts trivially on $R$. Thus, if $R$ is infinite-dimensional, then so is $E_{1,1}^{\infty}$, and finally so is $H_2(L;K)$. This is a contradiction. 
 \end{proof}

 \begin{rem}
  Notice that if $\mf{g}$ is free of rank $2$ the conclusion was more immediate: $W = R = 0$, so $L \simeq \rho(L)$, and
  we already knew that $H_2( \rho(L);K)$ was infinite-dimensional.  
 \end{rem}

 \end{section}

\begin{section}{Nilpotent Lie algebras} \label{sec1}
 If $\mf{g}$ is abelian, then $\chi(\mf{g})$ is completely described in Proposition 7.1 in \cite{Men}. We consider here nilpotent Lie algebras 
 of class $c \geq 2$. We will show that if $\mf{g}$ is nilpotent of class $c$, then $\chi(\mf{g})$ is nilpotent of class at most $c+2$.

Denote by $\mf{n}_{m,c}$ the free nilpotent Lie algebra of rank $m$ and class $c$. By the comments in the previous section, we can obtain $L(\mf{n}_{m,c})$ by taking the quotient of $\mc{L} \simeq L(\mf{g})$ (where $\mf{g}$ is free of rank $m$) by the ideal generated by the elements $\mu(u)$, for brackets $u$ of length at least $c+1$ involving the generators of $\mf{g}$.

Consider the generators $a_i$, $b_{i,j}$ of $\mc{L}$. Define
\[ d(a_i) := 1, \hbox{  } d(b_{i,j}) := 2,\]
for all $i<j$. For a right-normed bracket $\ell = [\ell_1, \ldots, \ell_n]$ involving some of these generators, we define the \textit{degree} $d(\ell)$
of $\ell$ as
\[ d(\ell) = \sum_{j=1}^n d(\ell_j).\]

Now we are ready to determine the (class of) nilpotency of $\chi(\mf{g})$.

\begin{theorem} \label{thmnilpclass}
Suppose that $\mf{h}$ is nilpotent of class $c$. Then $\chi(\mf{h})$ is nilpotent and its nilpotency class is bounded by the smallest even integer greater than $c$.
\end{theorem}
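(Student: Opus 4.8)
The plan is to reduce everything to a computation inside $\mc{L} \simeq L(\mf{h})$. Since $\chi(\mf{h}) \simeq \mc{L} \rtimes \mf{h}$ and $\mf{h}$ is nilpotent of class $c$, it suffices to control the lower central series of $\chi(\mf{h})$ in terms of iterated brackets $[\mf{h}, \ldots, \mf{h}, \mc{L}]$ together with iterated self-brackets of $\mc{L}$. Because $\mf{h}$ is nilpotent of class $c$, any bracket that collapses entirely into $\mf{h}$ vanishes after $c$ steps, so the only way to survive in a long lower-central term of $\chi(\mf{h})$ is to involve at least one factor from $\mc{L}$. Thus the key is to show that a sufficiently long right-normed bracket of elements $x_s$ and generators $a_i, b_{i,j}$ (or brackets thereof) is zero in $\mc{L}$, given that in $\mc{L} = L(\mf{n}_{m,c})$ all $\mu(u)$ for $u$ of length $\geq c+1$ have been killed.

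First I would pin down, using the degree function $d(a_i)=1$, $d(b_{i,j})=2$, exactly which iterated brackets of the $a$'s and $b$'s survive in $\mc{L}$ after quotienting by the $\mu(u)$ with $\ell(u) \geq c+1$. The relations \eqref{Lrel1}, \eqref{Lrel2.1}, \eqref{Lrel2.2} are degree-homogeneous in $d$, and Lemmas \ref{L35}, \ref{L36} together with \eqref{Lrel2.1}/\eqref{Lrel2.2} let me convert any $b_{i,j}$ appearing in the interior of a long right-normed bracket into a pair $[a_i,a_j]$ at the cost of lengthening the bracket by one, or to collapse odd-length right-normed brackets of $a$'s into a single $\mu$-type element. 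The upshot I expect: every element of $\mc{L}$ is a linear combination of elements $\mu(u)$ and $\xi(u)$ for brackets $u$ in the $x$'s, and the relation $\mu(u)=0$ for $\ell(u) \geq c+1$ forces $\xi(u)=0$ as well for $\ell(u)$ large (one more than the $\mu$ bound, because $\xi$ shifts parity and uses one extra $a$). Concretely, $\mc{L}$ should be spanned by $\mu(u),\xi(u)$ with $\ell(u)\le c$, hence $d$-degree at most $c+1$, and the self-bracket $[\mc{L},\mc{L}]$ lands in degree $\geq$ something that, combined with the action of $\mf{h}$ raising degree by $1$ each step, gives nilpotency of $\chi(\mf{h})$ of class at most the smallest even integer $>c$.

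The cleanest route to the even-integer bound: I would show directly that $\gamma_{e+1}(\chi(\mf{h})) = 0$ where $e$ is the smallest even integer $> c$. Take a right-normed bracket $[z_1, \ldots, z_{e+1}]$ with each $z_t \in \{x_s\} \cup \{a_i\} \cup \{b_{i,j}\}$ (these generate $\chi(\mf{h})$). Using Lemma \ref{3id}-type reductions (i.e. Lemmas \ref{L35} and \ref{L36}), push all the $x_s$ to act as derivations and rewrite the whole expression as a sum of elements $\mu(u)$, $\xi(u)$ for brackets $u$ in the $x_s$ of length $\geq e+1 - (\text{number of }b\text{'s, each contributing }2) \geq$ a value forcing $\ell(u) > c$; here the evenness of $e$ is exactly what makes the parity bookkeeping in the $\mu/\xi$ dichotomy close up without leaving a stray surviving term of length $c$. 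Then all such terms vanish by the defining quotient. The arithmetic is: an $a$ contributes $1$, a $b$ contributes $2$, the total $d$-degree of the surviving bracket is $\geq e+1$, and since $\mu(u),\xi(u)$ of $d$-degree $\geq c+2$ vanish, we are done once $e+1 \geq c+2$, i.e. $e \geq c+1$, with $e$ even giving $e = c+1$ for $c$ odd and $e=c+2$ for $c$ even.

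The main obstacle I anticipate is the bookkeeping in converting a general iterated bracket in the mixed generating set into $\mu/\xi$-normal form while controlling degree, because the Jacobi identity can momentarily decrease the apparent length before the relations \eqref{Lrel2.1}/\eqref{Lrel2.2} restore it, and one must be careful that applying those relations is "licit" (they are stated only for odd-length brackets, as noted after their introduction). The parity tracking — ensuring that after all reductions the worst surviving term has $d$-degree $\geq c+2$ rather than $c+1$ — is precisely where the distinction between $c$ odd and $c$ even enters, and getting a clean inductive statement that survives the Jacobi manipulations is the technical heart of the argument. I would organize this as an induction on the number of $x_s$-factors (peeling them off as derivations via the "inner derivation" trick from the proof of Lemma \ref{L35}) with the base case being a pure bracket in $\mc{L}$, handled by the normal-form description above.
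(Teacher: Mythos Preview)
Your endgame is right --- land in $\mc{L}$, count $d$-degree, and kill everything via the relations $\mu(u)=0$ for $\ell(u)\geq c+1$ together with \eqref{Lrel2.1}, \eqref{Lrel2.2} --- and this is exactly how the paper finishes. What you are missing is the reduction that makes the parity bookkeeping evaporate rather than merely ``anticipated''. The paper does \emph{not} attack a general $(e+1)$-fold bracket in $\{x_s\}\cup\{a_i\}\cup\{b_{i,j}\}$. Instead, working with the original generators $x_i,x_i^{\psi}$ of $\chi(\mf{h})$, it uses the identity $[D,L]=0$ (plus Jacobi) to show that any $[x_{i_{n+1}}^{\theta_{n+1}},\dots,x_{i_1}^{\theta_1}]$ is a linear combination of brackets of the special shape $[x_{j_{n+1}},\dots,x_{j_2},x_{j_1}^{\psi}]$, i.e.\ a \emph{single} $\psi$ at the innermost slot. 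Under $\theta$ this is $-[x_{j_{n+1}},\dots,x_{j_2},a_{j_1}]$: one generator of $\mc{L}$ hit by $n$ derivations from $\mf{h}$, hence an element of $\mc{L}$ of $d$-degree \emph{exactly} $n+1$. Since $n$ was chosen as the smallest even integer greater than $c$, the number $n+1$ is odd, the odd-length relations \eqref{Lrel2.1}, \eqref{Lrel2.2} apply directly, and everything collapses to the two shapes $[a_{i_1},\dots,a_{i_{n+1}}]$ and $[a_{i_1},\dots,a_{i_{n-1}},b_{i_n,i_{n+1}}]$, each of which contains a $\mu(u)$ with $\ell(u)\geq c+1$ as an inner factor.

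Your route, by contrast, throws $b_{i,j}$ into the generating set, so an $(e+1)$-fold bracket with $r$ letters $b$ has $d$-degree $(e+1)+r$, which is \emph{even} when $r$ is odd; the odd-length hypotheses of \eqref{Lrel2.1}, \eqref{Lrel2.2} then fail and you are stuck with exactly the normalization problem you flag. (Had you used only $\{x_s,a_i\}$ --- already a generating set --- the degree would be exactly $e+1$ and your argument would converge to the paper's.) Separately, the assertion that ``every element of $\mc{L}$ is a linear combination of $\mu(u)$ and $\xi(u)$'' is too strong: elements such as $[b_{1,2},b_{3,4}]$ are of neither shape, and the paper never needs such a statement. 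What it needs, and asserts, is only that a right-normed bracket of a fixed \emph{odd} $d$-degree can be rewritten in the two normal forms above --- and the $[D,L]=0$ step is precisely what guarantees that odd degree.
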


\begin{proof}
 Clearly we can assume that $\mf{h}$ is free nilpotent of class $c$, so that all the comments above the theorem make sense. Suppose that we want to 
 show that $\chi(\mf{h})$ is nilpotent of class $n$, where $n$ may be $c+1$ or $c+2$ depending on the parity of $c$. Let $x_1, \ldots, x_m$ be a set 
 of generators for $\mf{h}$. It is enough then to show that 
 \[ w=[x_{i_{n+1}}^{\theta_{n+1}}, \ldots, x_{i_2}^{\theta_2}, x_{i_1}^{\theta_1}]=0\]
 for all $1 \leq i_j \leq m$ and $\theta_j \in \{id, \psi\}$. Clearly if $\theta_i=id$ for all $i$, then $w=0$. Similarly, $w=0$ if $\theta_i = \psi$ for 
 all $i$. We can assume without loss of generality that $\theta_1 = id$.  Let $k = min \{j | \theta_j \neq id\}$. Since 
 $[x_{i_k}^{\psi}, x_{i_{k-1}}, \ldots, x_{i_1}] \in D$ and $[D,L]=0$, we have
 \[ w=[x_{i_{n+1}}, \ldots, x_{i_{k+1}},x_{i_k}^{\psi}, x_{i_{k-1}}, \ldots, x_{i_2}, x_{i_1}].\]
 It follows by induction on $k$ that $w$ is a linear combination of terms of the form $[x_{j_{n+1}}, \ldots, x_{j_2}, x_{j_1}^{\psi}]$, for some
 $1 \leq j_t\leq m$. Indeed, this is clear if $k=2$. If $k>2$, by the Jacobi identity we have:
\begin{dmath}
   w=[x_{i_{n+1}}, \ldots, x_{i_{k+1}}, x_{i_{k-1}}, x_{i_k}^{\psi}, x_{i_{k-2}}, \ldots, x_{i_2}, x_{i_1}]+
   [x_{i_{n+1}}, \ldots, x_{i_{k+1}}, [x_{i_k}^{\psi}, x_{i_{k-1}}], x_{i_{k-2}}, \ldots,x_{i_2}, x_{i_1}].
  \end{dmath}
 By induction hypothesis we can rewrite the first term on the right-hand side of the equation above in the form we want. By antisymmetry the second term is
 \[[x_{i_{n+1}}, \ldots,x_{i_{k+1}}, [x_{i_{k-2}}, \ldots,x_{i_2}, x_{i_1}],  [x_{i_{k-1}},x_{i_k}^{\psi}]],\]
 which can be rewritten by the Jacobi identity as a linear combination of terms of the form
 \[ [x_{i_{n+1}}, \ldots, x_{i_{k+1}}, x_{\sigma(i_{k-2})}, \ldots, x_{\sigma(i_2)}, x_{\sigma(i_1)}, x_{i_{k-1}}, x_{i_k}^{\psi}] \]
 for some permutations $\sigma \in S_{k-2}$. All of this means that $\chi(\mf{h})$ is nilpotent of class $n$ if
\[ w=[x_{i_{n+1}}, \ldots, x_{i_2}, x_{i_1}^{\psi}]=0\]
for all $1 \leq i \leq m$. 

Now we interpret this in terms of the isomorphism $\theta: \chi(\mf{h}) \to \mc{L}/J \rtimes \mf{h}$. We have:
\[ \theta(w) = \theta([x_{i_{n+1}}, \ldots, x_{i_2}, x_{i_1}^{\psi}]) = [x_{i_{n+1}}, \ldots, x_{i_2}, x_{i_1}-a_{i_1}]= -[x_{i_{n+1}}, \ldots, x_{i_2}, a_{i_1}],\]
since $[x_{i_{n+1}}, \ldots, x_{i_2}, x_{i_1}] = 0$. By induction we see that $-[x_{i_{n+1}}, \ldots, x_{i_2}, a_{i_1}]$ is a linear 
combination of brackets $\ell = [\ell_1, \ldots, \ell_k]$, involving the generators of $\mc{L}$, with $d(\ell)=n+1$. 

Finally, we consider the parity of $c$. If $c$ is odd, we are trying to prove that $\chi(\mf{g})$ is nilpotent of class $c+1$, that is, $n=c+1$. Given a bracket 
$\ell = [\ell_1, \ldots, \ell_k]$ with $d(\ell) = c+2$, we can use the defining relations of $\mc{L}$ to rewrite it as a linear combination of elements of the forms
\[[a_{i_1}, \ldots, a_{i_{c+2}}]\]
and
\[[a_{i_1}, \ldots, a_{i_c}, b_{i_{c+1},i_{c+2}}].\]
Notice that it essential the fact that $c+2$ is an odd integer, otherwise we would not be able to get rid of brackets of the form
\[[b_{i_1,i_2}, a_{i_3}, \ldots, a_{i_c}, b_{i_{c+1},i_{c+2}}].\]

Now, as observed before, $\mu(u)$ is trivial in $\mc{L}$ for any $u$ a bracket involving the generators of $\mf{g}$ with length at least $c+1$. In particular, 
for $u=[x_{i_1}, \ldots, x_{i_{c+2}}]$ and $v=[x_{i_2}, \ldots, x_{i_{c+2}}]$ we get
\[ \mu(u) = [a_{i_1}, \ldots, a_{i_{c+1}},a_{i_{c+2}}]\]
and
\[\mu(v)= [a_{i_2}, \ldots, a_{i_c}, b_{i_{c+1},i_{c+2}}].\]
Clearly $\mu(u)=0$ and $\mu(v)=0$ for all $u$ and $v$ of those forms implies that $\ell=0$. Thus $\chi(\mf{g})$ is nilpotent of class at most $c+1$.

Similarly, suppose that $c$ is even. Now we want to show that $\chi(\mf{g})$ is nilpotent of class at most $n=c+2$. Once again $n+1 = c+3$ is an odd integer,
so as before we only need to show that brackets of the forms
\[[a_{i_1}, \ldots, a_{i_{c+3}}]\]
and
\[[a_{i_1}, \ldots, a_{i_{c+1}}, b_{i_{c+2},i_{c+3}}].\]
The same argument works: the fact that $\mu([x_{i_1}, \ldots, x_{i_{c+1}}])$ and $\mu([x_{i_1}, \ldots, x_{i_{c+2}}])$ must be trivial in $\mc{L}$ is enough to guarantee what we
want. In this case the proof works to show that $\mf{g}$ must be nilpotent of class at most $n= c+2$, as we wanted.
\end{proof}

 These bounds are sharp in the generality of the statement of the theorem, as we will see in the next section. We can, however, obtain a sharper result for $2$-generated Lie algebras
 by a very simple argument.
\begin{prop}
If $\mf{g}$ is $2$-generated and nilpotent of class $c$, then $\chi(\mf{g})$ is nilpotent of class $c+1$.
\end{prop}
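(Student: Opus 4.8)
The plan is to reduce to the free nilpotent case and then read off the class from the natural degree grading on $\chi(\mf{g})$, using the isomorphism $W(\mf{g})\cong H_2(\mf{g};K)$ that is available because $\mf{g}$ is $2$-generated.

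First I would reduce to $\mf{g}=\mf{n}_{2,c}$. A $2$-generated nilpotent Lie algebra of class $c$ is a quotient of $\mf{n}_{2,c}$, and since the defining relations $[x,x^{\psi}]=0$ of $\chi(\mf{n}_{2,c})$ still hold in $\chi(\mf{g})$, there is a surjection $\chi(\mf{n}_{2,c})\twoheadrightarrow\chi(\mf{g})$; hence it is enough to bound the nilpotency class of $\chi(\mf{n}_{2,c})$. On $\chi:=\chi(\mf{n}_{2,c})$ put the grading in which each $x_i$ and each $x_i^{\psi}$ has degree $1$. The relators $[x,x^{\psi}]=0$ and the relators defining $\mf{n}_{2,c}$ inside the rank-$2$ free Lie algebra are homogeneous, so $\chi=\bigoplus_{k\ge1}\chi_k$ is graded and generated in degree $1$; therefore $\gamma_k(\chi)=\bigoplus_{j\ge k}\chi_j$, and the statement is equivalent to the vanishing $\chi_k=0$ for all $k\ge c+2$.

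Next I would bring in $\rho$. Since $\rho$ is homogeneous, $W=\ker\rho=L\cap D$ is a graded ideal, and $\rho$ embeds $\chi/W$ into $\mf{n}_{2,c}\oplus\mf{n}_{2,c}\oplus\mf{n}_{2,c}$, which is nilpotent of class $c$. Hence $\gamma_{c+1}(\chi)\subseteq W$, and comparing homogeneous components gives $\chi_k=W_k$ for every $k\ge c+1$; so it remains to prove $W_k=0$ for $k\ge c+2$. Because $\mf{n}_{2,c}$ is $2$-generated we have $R(\mf{n}_{2,c})=0$ (as recalled in the introduction), so $W\cong W/R\cong H_2(\mf{n}_{2,c};K)$ by Theorem~1.6 of \cite{Men}, and this isomorphism is homogeneous. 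By Hopf's formula $H_2(\mf{n}_{2,c};K)\cong\gamma_{c+1}(\mf{f})/\gamma_{c+2}(\mf{f})$, where $\mf{f}$ is free of rank $2$, and this is concentrated in degree $c+1$. Thus $W_k=0$ for $k\ne c+1$, so $\chi_k=W_k=0$ for $k\ge c+2$, proving that $\chi(\mf{g})$ is nilpotent of class at most $c+1$. For the exact value: $\chi/L\cong\mf{n}_{2,c}$ has class $c$, while $\chi_{c+1}=W_{c+1}\cong H_2(\mf{n}_{2,c};K)\ne0$ by Witt's formula, so $\gamma_{c+1}(\chi)\ne0$ and the class is exactly $c+1$.

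The one point that needs care is that the isomorphism $W\cong H_2(\mf{g};K)$ of \cite{Men} is compatible with the degree grading; this holds because it is built from the homogeneous maps $\alpha,\beta,\rho$ and the associated (homogeneous) five-term sequences. Apart from this, and the vanishing $R(\mf{n}_{2,c})=0$, the argument is just bookkeeping with the grading — which is what makes it short.
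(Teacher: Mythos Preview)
Your argument takes a genuinely different route from the paper's. The paper's proof is a three-line computation: from $R(\mf{g})=[\mf{g},L(\mf{g}),\mf{g}^{\psi}]=0$ one deduces $[u,v,w^{\psi}]=[u,v^{\psi},w^{\psi}]$ for all $u,v,w\in\mf{g}$, and then for any length-$(c+2)$ bracket
\[
[x_{i_1},\ldots,x_{i_{c+1}},x_{i_{c+2}}^{\psi}]
=[x_{i_1},\ldots,x_{i_c},x_{i_{c+1}}^{\psi},x_{i_{c+2}}^{\psi}]
=\cdots
=[x_{i_1},x_{i_2}^{\psi},\ldots,x_{i_{c+2}}^{\psi}]=0,
\]
the last step because the inner bracket of length $c+1$ vanishes in $\mf{g}^{\psi}$. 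So the paper uses $R=0$ in its ``raw'' form as a commutation identity, while you use it in the ``processed'' form $W\cong H_2(\mf{g};K)$ and then invoke Hopf's formula together with a grading argument. Your route has the pleasant by-product of pinning down the exact class of $\chi(\mf{n}_{2,c})$, which the paper's argument does not give directly.

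The one real gap in your proposal is precisely the point you flag: that the isomorphism $W\cong H_2(\mf{g};K)$ from \cite{Men} respects the degree grading. Saying it is ``built from the homogeneous maps $\alpha,\beta,\rho$ and the associated five-term sequences'' is not a proof; you would need to exhibit the map and check homogeneity. Without this, you only know $\dim W=\dim H_2(\mf{n}_{2,c};K)$, not that $W$ is concentrated in degree $c+1$, and the conclusion $W_{c+2}=0$ does not follow. This can presumably be extracted from \cite{Men}, but as written the step is asserted rather than justified. By contrast, the paper's argument avoids this issue entirely: it never needs the grading or the identification of $W$ with a Schur multiplier, only the vanishing of $R$ and elementary bracket manipulations.
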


\begin{proof}
 As in the proof of the theorem, it is enough to show that right-normed brackets of the form
 \[w = [x_{i_1},  \ldots, x_{i_{c+1}}, x_{i_{c+2}}^{\psi}]\]
 are trivial. Now, by Section 7 of \cite{Men} we know that $R(\mf{g}) = [\mf{g},L(\mf{g}), \mf{g}^{\psi}]=0$ whenever $\mf{g}$ is $2$-generated. In particular,
 \[[u,v,w^{\psi}] = [u,v^{\psi},w^{\psi}]\]
 for all $u,v,w \in \mf{g}$. But then, by induction, we have:
 \[w = [x_{i_1}, \ldots, x_{i_c}, x_{i_{c+1}}^{\psi}, x_{i_{c+2}}^{\psi}] = \ldots = [x_{i_1}, x_{i_2}^{\psi}, \ldots,  x_{i_{c+1}}^{\psi}, x_{i_{c+2}}^{\psi}]=0,\]
 since $[x_{i_2}^{\psi}, \ldots,  x_{i_{c+1}}^{\psi}, x_{i_{c+2}}^{\psi}]$ is trivial in $\mf{g}^{\psi}$.
 \end{proof}
\end{section}

\begin{section}{Examples}
For the classes of nilpotency $c \leq 3$, we can actually get from the proofs in the previous sections a concrete description of $\chi(\mf{n}_{m,c})$.

\subsection{Free nilpotent of class 2}

\begin{crlr}
If $\mf{h} =  \mf{n}_{m,2}$, then $L(\mf{h})$ is free nilpotent of rank $m+ \binom{m}{2}$ and class $2$. In particular, we have
\[dim \chi(\mf{h}) = 2k+ \binom{k}{2},\]
where $k= m + \binom{m}{2}$, and $\chi(\mf{h})$ is nilpotent of class exactly $4$ if $m \geq 3$.
\end{crlr}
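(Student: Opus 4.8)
The plan is to transport everything through the isomorphism $\chi(\mf{g})\cong\mc{L}\rtimes\mf{g}$ of Theorem \ref{Thm1}, with $\mf{g}$ free of rank $m$, so that $L(\mf{h})$ becomes $\mc{L}/J$, where $J$ is the ideal of $\mc{L}\rtimes\mf{g}$ generated by $\mu(u)$ over all right-normed brackets $u$ in $x_1,\dots,x_m$ of length $\geq 3$. Let $F$ be the free Lie algebra on the $k=m+\binom m2$ symbols $a_i$, $b_{i,j}$, so that $\mc{L}=F/\langle\langle\mc{R}\rangle\rangle$, where $\mc{R}$ collects the relators \eqref{Lrel1}, \eqref{Lrel2.1}, \eqref{Lrel2.2}. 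The whole corollary reduces to one identity of ideals,
\[ J=\gamma_3(\mc{L}).\]

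For the inclusion $J\subseteq\gamma_3(\mc{L})$: by the definition of $\mu$, every generator $\mu(u)$ of $J$ is a right-normed bracket in at least three of the $a_i$, $b_{i,j}$, hence lies in $\gamma_3(\mc{L})$; and $\gamma_3(\mc{L})$ is an ideal of the semidirect product, since it is an ideal of $\mc{L}$ and each $x_s$ acts as a derivation, so $x_s\cdot\gamma_3(\mc{L})=x_s\cdot[\mc{L},[\mc{L},\mc{L}]]\subseteq\gamma_3(\mc{L})$. Thus the smallest ideal of $\mc{L}\rtimes\mf{g}$ containing all $\mu(u)$ is contained in $\gamma_3(\mc{L})$.

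The reverse inclusion is the crux, and the step I expect to be the main obstacle. It suffices to show that every right-normed triple bracket $[g_1,g_2,g_3]$ of generators of $\mc{L}$ lies in $J$, and I would organize this by the number of $b$'s among $g_1,g_2,g_3$. With none, $[a_i,a_j,a_k]=\mu([x_i,x_j,x_k])\in J$. With exactly one, the Jacobi identity rewrites the bracket as $\pm[a_?,a_?,b_{?,?}]=\pm\mu([x_?,x_?,x_?,x_?])\in J$. With two or three, relation \eqref{Lrel2.1} with $n=3$ is invoked to trade a pair of $b$'s for brackets $[a_?,a_?]$; after expanding by the Jacobi identity this turns $[g_1,g_2,g_3]$ into a linear combination of right-normed brackets involving at most one $b$ and at most four further $a$'s, and each of these is peeled from the outside until one reaches a core $[a_{i_1},a_{i_2},a_{i_3}]$ or $[a_{i_1},a_{i_2},b_{i_3,i_4}]$ — a $\mu(\cdot)$, hence in the ideal $J$ of $\mc{L}$ (here one uses that the component of a free Lie algebra linear in a fixed generator is spanned by right-normed brackets with that generator at the innermost position). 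The care required is that the relations \eqref{Lrel2.1}--\eqref{Lrel2.2} are only ever applied to brackets of odd length — here length $3$ and $5$ — and that the elimination of $b$'s genuinely terminates; it is exactly this bookkeeping that breaks down for higher nilpotency classes, where the relators of type \eqref{Lrel1} no longer become redundant, as remarked in the introduction.

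With $J=\gamma_3(\mc{L})$ established the rest is formal. Every relator in $\mc{R}$ is a sum of brackets in at least three generators, i.e. $\mc{R}\subseteq\gamma_3(F)$; therefore
\[ L(\mf{h})=\mc{L}/\gamma_3(\mc{L})=F/\bigl(\gamma_3(F)+\langle\langle\mc{R}\rangle\rangle\bigr)=F/\gamma_3(F),\]
the free nilpotent Lie algebra of class $2$ on $k=m+\binom m2$ generators, of dimension $k+\binom k2$ by Witt's formula. Since $\chi(\mf{h})\cong L(\mf{h})\rtimes\mf{h}$ and $\dim\mf{h}=\dim\mf{n}_{m,2}=m+\binom m2=k$, this gives $\dim\chi(\mf{h})=2k+\binom k2$. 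Finally, Theorem \ref{thmnilpclass} already bounds the nilpotency class of $\chi(\mf{h})$ by $4$; for the matching lower bound when $m\geq 3$ I would point to $[\tilde b_{1,2},\tilde b_{1,3}]$, a commutator of two elements of $\gamma_2(\chi(\mf{h}))$ and hence in $\gamma_4(\chi(\mf{h}))$, which is nonzero because $\tilde b_{1,2}$ and $\tilde b_{1,3}$ are distinct free generators of the free nilpotent class-$2$ algebra $L(\mf{h})\subseteq\chi(\mf{h})$. Hence the class is exactly $4$.
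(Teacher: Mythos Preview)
Your proof is correct and follows essentially the same route as the paper. Both arguments pass through the presentation $\mc{L}$, identify $L(\mf{h})$ as $\mc{L}/J$, and show that modulo the $\mu(u)$'s every triple bracket of generators vanishes by invoking the relations \eqref{Lrel2.1} to replace pairs of $b$'s by $[a_?,a_?]$'s until only $a$-brackets and $[a_?,a_?,b_{?,?}]$-cores remain; your packaging of this as the single identity $J=\gamma_3(\mc{L})$ is a tidy reformulation of the paper's direct verification that each commutator of generators is central, and your witness $[\tilde b_{1,2},\tilde b_{1,3}]$ for class exactly $4$ is, up to a scalar, the same element the paper exhibits via $[[x_1,a_2],[x_1,a_3]]=\tfrac14[b_{1,2},b_{1,3}]$. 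One cosmetic point: in the one-$b$ case the Jacobi identity gives a \emph{linear combination} of terms $[a_?,a_?,b_{?,?}]$ rather than a single $\pm[a_?,a_?,b_{?,?}]$, but this does not affect the argument.
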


\begin{proof}
By the previous section, $\mu(u)$ is trivial if $u$ has length at least $3$. Thus:
\begin{equation}  \label{c2.1}
 \mu([x_{i_1}, x_{i_2}, x_{i_3}]) = [a_{i_1}, a_{i_2}, a_{i_3}]=0
\end{equation}
 and 
\begin{equation} \label{c2.2}
 \mu([x_{i_1}, x_{i_2}, x_{i_3}, x_{i_4}]) = [a_{i_1}, a_{i_2}, b_{i_3,i_4}]=0
\end{equation}
for all $i_j$. It is clear then that any $[a_i,a_j]$ is a central element in $\mc{L}$. Moreover,
by relation \eqref{Lrel2.1} we have
\[ [b_{i_1,i_2}, a_{i_3}, b_{i_4,i_5}] = [[a_{i_1},a_{i_2}], a_{i_3}, a_{i_4}, a_{i_5}]\]
and 
\[ [b_{i_1,i_2}, b_{i_3,i_4}, b_{i_5,i_6}] = [[a_{i_1},a_{i_2}], [a_{i_3},a_{i_4}], a_{i_5}, a_{i_6}]\]
which also become trivial by \eqref{c2.1}. This enough to conclude that both $[a_i,b_{j,k}]$ and $[b_{i,j}, b_{k,l}]$ are also central in $\mc{L}$.
Finally, it is clear that the original relations of $\mc{L}$ and all $\mu(u)$, with $u$ a bracket of length greater than $4$, are actually consequences of \eqref{c2.1} and \eqref{c2.2}. Thus 
$L(\mf{h})$ is free nilpotent of class $2$ with basis $a_i$ and $b_{i,j}$, for all $1 \leq i < j \leq m$.

The formula for the dimension follows clearly from the fact that $\chi(\mf{h}) \simeq L(\mf{h}) \rtimes \mf{h}$. Finally, if $m \geq 3$, then we can consider the element
 \[ [[x_1, a_2],[x_1, a_3]] = \frac{1}{4}[b_{1,2},b_{1,3}] \neq 0,\]
which is clearly a non-trivial element of $\gamma_4(\chi(\mf{h}))$.
\end{proof}

In order to compute the formula for the dimension of $R(\mf{n}_{m,2})$ it suffices to subtract from the dimension of 
$\chi(\mf{n}_{m,2})$, the dimensions of $Im(\rho)$ and $H_2(\mf{n}_{m,2};K)$. The former can be computed by observing that 
\[ Im(\rho) = \{(x,y,z) \in (\mf{n}_{m,2})^3  | \ \ x-y+z \in \mf{n}'_{m,2}\},\]
so $dim(Im(\rho)) = 2 dim(\mf{n}_{m,2}) + dim(\mf{n}'_{m,2})$, and these two quantities are well-known. Regarding the other term, we have:
\[H_2(\mf{n}_{m,2};K)\simeq \gamma_3/\gamma_4\]
where the $\gamma_i$ are the terms of the lower central series of $F$ (the free Lie algebra on $m$ generators). The dimension of $\gamma_3/\gamma_4$ is $\frac{1}{3}(m^3-m)$ by Witt's dimension formula. Putting all of this together, we obtain the polynomial formula for $dim(R)$, as stated in the introduction.

\subsection{Free nilpotent of class 3}
\begin{crlr} 
If $\mf{h} =  \mf{n}_{m,3}$, then $L(\mf{h})$ is a central extension of $K^{m \binom{m}{2}}$, by 
$\mf{n}_{m,4} \oplus \mf{n}_{\binom{m}{2}, 2}$.
\end{crlr}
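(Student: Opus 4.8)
The plan is to pin down $L(\mf{h})$ via the description from Section~\ref{presL}: $L(\mf{h}) \simeq \mc{L}/J$, where $J$ is the ideal of $\mc{L}$ generated by the elements $\mu(u)$ with $u$ a right-normed bracket in $x_1,\dots,x_m$ of length at least $c+1=4$. Spelling these out, the generators of smallest degree are $[a_{i_1},a_{i_2},b_{i_3,i_4}]$ (degree $4$, from length-$4$ brackets) and $[a_{i_1},\dots,a_{i_5}]$ (degree $5$, from length-$5$ brackets); a direct check with the Jacobi identity shows that all remaining $\mu(u)$ lie in the ideal generated by these two families (e.g.\ $[a_{i_1},\dots,a_{i_4},b_{i_5,i_6}]=[a_{i_1},a_{i_2},[a_{i_3},a_{i_4},b_{i_5,i_6}]]\in J$). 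So, modulo the defining relators \eqref{Lrel1}--\eqref{Lrel2.2}, the algebra $L(\mf{h})$ is $\mc{L}$ with the two extra families of relators $[a_i,a_j,b_{k,l}]=0$ and $[a_{i_1},\dots,a_{i_5}]=0$.

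Next I would construct a model and a surjection onto it. Let $A$ be a copy of $\mf{n}_{m,4}$ on free generators $\alpha_1,\dots,\alpha_m$, let $B$ be a copy of $\mf{n}_{\binom{m}{2},2}$ on free generators $\beta_{j,k}$ ($1\le j<k\le m$, with $\beta_{k,j}=-\beta_{j,k}$, $\beta_{j,j}=0$), and let $Z$ be the vector space with basis $\{\zeta_{i,j,k}\}$, $1\le i\le m$, $1\le j<k\le m$, so $\dim Z=m\binom{m}{2}$. On $M:=A\oplus B\oplus Z$ define a bracket that restricts to those of $A$ and $B$, makes $Z$ central, sets $[\alpha_i,\beta_{j,k}]=\zeta_{i,j,k}$, and sets $[\gamma_2(A),B]=[A,\gamma_2(B)]=0$. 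Checking the Jacobi identity on triples of generators reduces to the centrality of $\gamma_2(A)$, $\gamma_2(B)$ and $Z$ in their respective subalgebras, so $M$ is a Lie algebra; it is generated by the $\alpha_i,\beta_{j,k}$, has $Z$ as a central ideal, and satisfies $M/Z\simeq\mf{n}_{m,4}\oplus\mf{n}_{\binom{m}{2},2}$. I would then define $\phi\colon\mc{L}/J\to M$ by $a_i\mapsto\alpha_i$, $b_{j,k}\mapsto\beta_{j,k}$. Well-definedness requires that the images of the relators \eqref{Lrel1}--\eqref{Lrel2.2} and of the two families $[a_i,a_j,b_{k,l}]$, $[a_{i_1},\dots,a_{i_5}]$ all vanish in $M$; this is quick, since $[\alpha_{i_1},\dots,\alpha_{i_5}]=0$ as $A$ has class $4$, $[\alpha_i,\alpha_j,\beta_{k,l}]=0$ as $[\alpha_i,\alpha_j]\in\gamma_2(A)$, and every term appearing in \eqref{Lrel1}--\eqref{Lrel2.2} is a bracket of length $\ge 3$ containing a $\beta$, hence lies in $[\gamma_2(A),B]+[A,\gamma_2(B)]+[Z,M]=0$. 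Since the $\alpha_i,\beta_{j,k}$ generate $M$, the map $\phi$ is surjective, so $\dim(\mc{L}/J)\ge\dim M$.

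It remains to produce a spanning set of $\mc{L}/J$ of cardinality $\le\dim M$, which forces $\phi$ to be an isomorphism. The claim is that every element of $\mc{L}/J$ is a linear combination of the $a_i$, the $a$-only right-normed brackets $[a_{i_1},\dots,a_{i_k}]$ with $2\le k\le 4$, the $b_{i,j}$, the $b$-only brackets $[b_{i,j},b_{k,l}]$, and the mixed elements $[a_i,b_{j,k}]$, and there are at most $\dim\mf{n}_{m,4}+\dim\mf{n}_{\binom{m}{2},2}+m\binom{m}{2}=\dim M$ of these. The technical heart is to show, by induction on degree, that every right-normed bracket involving at least one $a$, at least one $b$, and of degree at least $4$ is zero in $\mc{L}/J$: the Jacobi identity turns $[a_i,b_{j,k},a_l]$ into $-[a_i,a_l,b_{j,k}]=0$, so the $[a_i,b_{j,k}]$ are centralized by all $a$'s, from which one gets $[\gamma_2(\langle a_1,\dots,a_m\rangle),b_{k,l}]=0$; relators of type \eqref{Lrel2.1} rewrite every mixed bracket of degree $\ge 5$ as an $a$-only bracket of length $\ge 5$, hence $0$; and a mixed bracket of degree exactly $4$ is, after passing to right-normed form, a scalar combination of brackets of the shape $[a_i,a_j,b_{k,l}]$ and $[b_{i,j},a_k,a_l]$, which are all $0$. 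The same relators, applied to $b$-only brackets, kill $[b_{i,j},b_{k,l},b_{s,t}]$ and everything longer, and the relation $[a_{i_1},\dots,a_{i_5}]=0$ kills the longer $a$-only brackets. This establishes the spanning claim, hence $\phi$ is an isomorphism. Transporting the structure of $M$ back through $\phi$ gives that the span of the $[a_i,b_{j,k}]$ is a central ideal of $L(\mf{h})$ isomorphic to $K^{m\binom{m}{2}}$, fitting into a central extension $0\to K^{m\binom{m}{2}}\to L(\mf{h})\to\mf{n}_{m,4}\oplus\mf{n}_{\binom{m}{2},2}\to 0$, which is exactly the assertion.

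I expect the main obstacle to be precisely this last reduction to canonical form. Several of the needed vanishing statements for mixed brackets interlock (centrality of the $[a_i,b_{j,k}]$, vanishing of $[\gamma_2(\langle a_1,\dots,a_m\rangle),b_{k,l}]$, and the degree-$\ge 5$ collapse), so they must be ordered carefully, and in each step one has to apply the relators \eqref{Lrel1}--\eqref{Lrel2.2} together with the relations $[a_i,a_j,b_{k,l}]=0$, rather than the relations $\mu(u)=0$ in isolation. This is also the point where the phenomenon noted in the introduction becomes visible: for class $c\ge 4$ the degree-$4$ relators $[a_i,a_j,b_{k,l}]$ are no longer available, so the mixed brackets of degree $4$ no longer collapse and the same clean normal form cannot be expected.
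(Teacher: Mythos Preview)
Your approach is essentially the paper's, carried out with more care: identify the two additional families $[a_i,a_j,b_{k,l}]=0$ and $[a_{i_1},\dots,a_{i_5}]=0$, deduce from \eqref{Lrel2.1} that $[b,a,b]$ and $[b,b,b]$ collapse to $a$-brackets of length $\ge 5$, and conclude that the $[a_i,b_{j,k}]$ are central while the $a$- and $b$-subalgebras are free nilpotent of classes $4$ and $2$. The paper stops at ``it is clear that the relations of $\mc{L}$ become trivial \ldots and no relations of smaller degree involving the $a_i$ can exist''; your model-plus-dimension-count makes that step honest.

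One small slip to fix: your justification that $\phi$ is well-defined says ``every term appearing in \eqref{Lrel1}--\eqref{Lrel2.2} is a bracket of length $\ge 3$ containing a $\beta$''. That fails for the second summand of \eqref{Lrel2.1} or \eqref{Lrel2.2} when all the remaining $u_i$ are $a$'s: after replacing the two $b$'s by $[a,a]$'s there is no $\beta$ left. In that case the term is an $a$-only bracket of length $\ge n+2\ge 5$, hence zero in $A=\mf{n}_{m,4}$; just add that case explicitly. Similarly, your sentence ``relators of type \eqref{Lrel2.1} rewrite every mixed bracket of degree $\ge 5$ as an $a$-only bracket'' is only literally true for odd-length brackets with at least two $b$'s; for even length or a single $b$ you are really using the already-established facts $[a_i,a_j,b_{k,l}]=0$, $[\gamma_2(\langle a\rangle),b_{k,l}]=0$, and centrality of $[a_i,b_{j,k}]$, exactly as you note later. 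Reorder the argument so these are proved first, then the degree-$\ge 5$ collapse follows by a clean induction on length.
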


\begin{proof}
 Once again we must have:
 \begin{equation} \label{c3.1}
 \mu([x_{i_1}, \ldots, x_{i_4}]) = [a_{i_1},a_{i_2}, b_{i_3, i_4}] = 0
 \end{equation}
 and
 \begin{equation} \label{c3.2}
 \mu([x_{i_1}, \ldots, x_{i_5}]) = [a_{i_1}, \ldots, a_{i_5}]=0. 
 \end{equation}
 Also, by the defining relation \eqref{Lrel2.1} we have
 \[ [b_{i_1,i_2}, a_{i_3}, b_{i_4, i_5}] = [[a_{i_1},a_{i_2}], a_{i_3}, a_{i_4},a_{i_5}]\]
Thus, imposing \eqref{c3.1} and \eqref{c3.2} as relators, we get that $[a_i, b_{l,k}]$ is central and that the Lie algebra generated by the $a_i$'s is 
nilpotent of class $4$. Furthermore, again by \eqref{Lrel2.1} we have
\[ [b_{i_1,i_2}, b_{i_3,i_4}, b_{i_5,i_6}] = [[a_{i_1},a_{i_2}], [a_{i_3},a_{i_4}],b_{i_5,i_6}]=0,\]
so the Lie algebra generated by the $b_{i,j}$'s is nilpotent of class $3$. It is clear that the relations of $\mc{L}$ become trivial in the presence
of the relations described in the assumption of the proposition, and also clearly no relations of smaller degree involving the $a_i$ can exist. 
 \end{proof}

 \begin{rem}
  It is immediate in this case that $\chi(\mf{h})$ is nilpotent of class $4$, since $L(\mf{h})$ contains a copy of $\mf{n}_{m,4}$.
 \end{rem}

 \begin{crlr}
 $dim R(\mf{n}_{m,2}) = dim R(\mf{n}_{m,3})$ for all $m$. 
\end{crlr}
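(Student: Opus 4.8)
The plan is to reduce the statement to a computation of $dim\,R(\mf{g})$ in terms of invariants that are already under control, and then to substitute $\mf{g}=\mf{n}_{m,2}$ and $\mf{g}=\mf{n}_{m,3}$. First I would establish a general dimension formula. Since $\rho$ is a homomorphism of Lie algebras, $Im(\rho)$ is the subalgebra of $\mf{g}\oplus\mf{g}\oplus\mf{g}$ generated by all $(x,x,0)$ and $(0,x,x)$, and the elementary bracket computation already used in the text for $\mf{n}_{m,2}$ carries over verbatim to an arbitrary $\mf{g}$, showing $Im(\rho)=\{(x,y,z)\mid x-y+z\in\mf{g}'\}$; hence $dim\,Im(\rho)=2\,dim\,\mf{g}+dim\,\mf{g}'$. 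Combining this with $\chi(\mf{g})\simeq L(\mf{g})\rtimes\mf{g}$ and $W(\mf{g})=ker(\rho)$ gives $dim\,W(\mf{g})=dim\,L(\mf{g})-dim\,\mf{g}-dim\,\mf{g}'$, and then the isomorphism $W/R\simeq H_2(\mf{g};K)$ (Theorem 1.6 of \cite{Men}) yields
\[ dim\,R(\mf{g}) = dim\,L(\mf{g}) - dim\,\mf{g} - dim\,\mf{g}' - dim\,H_2(\mf{g};K). \]

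Next I would specialise to $\mf{g}=\mf{n}_{m,c}$ for $c=2,3$. Writing $W_k=\frac{1}{k}\sum_{d|k}\mu(d)m^{k/d}$ for the $k$-th Witt number on $m$ letters, we have $dim\,\mf{n}_{m,c}=\sum_{k=1}^{c}W_k$ and $dim\,\mf{n}_{m,c}'=\sum_{k=2}^{c}W_k$, while Hopf's formula for Lie algebras, applied to $\mf{n}_{m,c}=F/\gamma_{c+1}(F)$, identifies $H_2(\mf{n}_{m,c};K)$ with $\gamma_{c+1}(F)/\gamma_{c+2}(F)$, of dimension $W_{c+1}$ — this is exactly the identification used in the text for $c=2$. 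For $c=2$ I would insert $dim\,L(\mf{n}_{m,2})=k+\binom{k}{2}$ with $k=m+\binom{m}{2}=W_1+W_2$ from Corollary \ref{Crlr1} and simplify to
\[ dim\,R(\mf{n}_{m,2}) = \binom{k}{2} - W_2 - W_3. \]
For $c=3$ I would insert $dim\,L(\mf{n}_{m,3})=m\binom{m}{2}+dim\,\mf{n}_{m,4}+dim\,\mf{n}_{\binom{m}{2},2}$ from Corollary \ref{Crlr2}, together with $dim\,\mf{n}_{m,4}=\sum_{k=1}^{4}W_k$ and $dim\,\mf{n}_{\binom{m}{2},2}=\binom{m}{2}+\binom{\binom{m}{2}}{2}$, and simplify (using $\binom{m}{2}=W_2$) to
\[ dim\,R(\mf{n}_{m,3}) = m\binom{m}{2} + \binom{\binom{m}{2}}{2} - W_3. \]

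Finally, comparing the two outputs, the desired equality collapses to a single binomial identity: setting $a=m$ and $b=\binom{m}{2}$ we have $k=a+b$, so $\binom{a+b}{2}=\binom{a}{2}+ab+\binom{b}{2}$, and since $W_2=\binom{m}{2}=\binom{a}{2}$,
\[ \binom{k}{2}-W_2 = \binom{a}{2}+ab+\binom{b}{2}-\binom{a}{2} = ab+\binom{b}{2} = m\binom{m}{2}+\binom{\binom{m}{2}}{2}, \]
which is precisely what is needed. I do not expect a genuine obstacle here: once the general formula is in place the rest is finite bookkeeping, and the only points requiring a little care are checking that the description of $Im(\rho)$ survives unchanged from $\mf{n}_{m,2}$ to arbitrary $\mf{g}$, and that the Hopf-type identification of the Schur multiplier of a free nilpotent Lie algebra is invoked correctly. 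As an independent sanity check one may expand both sides as polynomials in $m$ and verify that each equals $\frac{1}{24}(3m^4-2m^3-15m^2+14m)$.
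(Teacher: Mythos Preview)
Your argument is correct and follows the same route as the paper: the paper's proof simply says to ``proceed as in the previous subsection and compute the exact dimension of $R(\mf{n}_{m,3})$'' via Witt's formula, which is exactly what you do in detail. Your reduction to the binomial identity $\binom{a+b}{2}=\binom{a}{2}+ab+\binom{b}{2}$ is a pleasant way to finish that the paper does not make explicit, but the underlying strategy is identical.
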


\begin{proof}
We can proceed as in the previous subsection and compute the exact dimension of $R(\mf{n}_{m,3})$ in terms of $m$. Here we use Witt's dimension formula to compute both $dim(\mf{n}_{m,c})$, for $c=2,3,4$, and $dimH_2(\mf{n}_{m,3}; K)$.
\end{proof}

  For $c \geq 4$ the situation is more complicated and we cannot expect to describe $\chi(\mf{h})$ as nicely as in the cases above. The reason for this is that the expression of type 
  $\mu(u)=0$ for $u$ of length at least $5$ will not trivialize the defining \eqref{Lrel1}, that is, the elements $[a_i,b_{j,k}]$ will not be central in general. 
\end{section}

%
%
%
%
%

\begin{section}{The ideal \texorpdfstring{$R$}{R}} \label{GSbasis}
Let $\mf{g}$ be free with generators $x_1, x_2, x_3$. Consider the presentation $L(\mf{g}) = \langle X | S\rangle$ described in Section \ref{presL}, where $X = \{a_i, b_{i,j}\}_{i,j}$.  We will assume that relators which are already trivial (in the free Lie algebra with free basis $X$) are not elements of $S$. For instance, for a relator of type \eqref{Lrel1}:
\[s = [a_i, a_j, b_{k,l}] - [a_i, b_{j,k}, a_l] - [a_i, a_k, b_{j,l}],\]
we assume that $j$, $k$ and $l$ are distinct indices.

For each even integer $n$, consider the element
\begin{equation}  \label{deffn}
  f_n = [b_{1,2}, a_2, \ldots, a_2, a_3] - [[a_1,a_2], a_2, \ldots, a_2, b_{2,3}], 
 \end{equation}
where $a_2$ appears $n$ times in each bracket. By applying the homomorphism $\rho$ we deduce that each $f_n$ lies in $R(\mf{g})$ (recall that $R(\mf{g}) = W(\mf{g}) = ker(\rho)$, since $\mf{g}$ is free).

\begin{prop}  \label{propRinfdim}
 The element $f_n$ is non-trivial in $L(\mf{g})$. 
\end{prop}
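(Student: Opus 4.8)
The plan is to use the theory of Gröbner–Shirshov bases developed in Section~\ref{Shi}, applied to the presentation $L(\mf{g}) = \langle X \mid S \rangle$ with $X = \{a_i, b_{i,j}\}$. By Theorem~\ref{thmShirshov}, if $f_n$ were trivial in $L(\mf{g})$, then the associative carrier $\bar{f}_n$ would have to contain $\bar{s}$ as a subword for some $s$ in a Gröbner–Shirshov basis of $\langle\langle S \rangle\rangle$. So the strategy is: (i) fix the weighted deg-lex ordering coming from $d(a_i)=1$, $d(b_{i,j})=2$, with $a_1 > a_2 > a_3 > b_{1,2} > b_{1,3} > b_{2,3}$ (or whatever ordering makes the computation cleanest); (ii) compute $\bar{f}_n$ explicitly; (iii) complete $S$ to a Gröbner–Shirshov basis $S^{(\infty)}$, or at least understand the carriers $\bar{s}$ of all elements that can appear, enough to see that none of them divides $\bar{f}_n$; (iv) conclude $f_n \neq 0$ in $L(\mf{g})$.

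Concretely, I would first pin down $\bar{f}_n$. Writing everything in terms of the regular monomial basis of the free Lie algebra on $X$, the term $[b_{1,2}, a_2, \ldots, a_2, a_3]$ (with $n$ copies of $a_2$) has a definite associative carrier — something like $b_{1,2}\, a_2^n\, a_3$ up to reordering dictated by the Lyndon–Shirshov condition — and one must check that this dominates the carrier of $[[a_1,a_2],a_2,\ldots,a_2,b_{2,3}]$ in the weighted deg-lex ordering, or vice versa; both brackets have the same degree $n+3$, so it comes down to the lex comparison. Once $\bar{f}_n$ is identified, the crux is step (iii): I must run the completion procedure on $S$ far enough to know the shape of all carriers of basis elements up to degree $n+3$. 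The relators of type \eqref{Lrel1}, \eqref{Lrel2.1}, \eqref{Lrel2.2} have carriers built from the $a_i$ and $b_{i,j}$ in controlled patterns, and the compositions among them should again produce relators whose carriers avoid the specific pattern appearing in $\bar{f}_n$. I expect that the carrier $\bar{f}_n$ uses the "wrong" arrangement of indices — e.g. a long run of $a_2$'s flanked by $b_{1,2}$ and $a_3$, an index combination that simply never arises as the leading term of any consequence of $S$ — and this is precisely why $f_n$ survives.

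The main obstacle will be controlling the completion of $S$: a priori the Gröbner–Shirshov basis of $\langle\langle S\rangle\rangle$ is infinite and its explicit description could be intricate. The saving observation, which I would emphasize, is that we do \emph{not} need the full basis. Since all relators in $S$ are homogeneous with respect to the degree function $d$, and $\bar{f}_n$ has degree $n+3$, it suffices by the last paragraph of Section~\ref{Shi} to complete $S$ only up to degree $n+3$ — a finite computation for each $n$ — and moreover the pattern of leading terms should be uniform in $n$, so a single structural argument (rather than $n$ separate computations) should handle all even $n$ at once. I would isolate the key combinatorial claim as: no carrier of any element of the completed set $S^{(\infty)}$ is of the form that divides $b_{1,2}\,a_2^{\,n}\,a_3$ (or whichever word $\bar{f}_n$ turns out to be), arguing this by tracking which monomials in $a_1, a_2, a_3, b_{1,2}, b_{1,3}, b_{2,3}$ can appear as leading terms — essentially that every consequence of $S$ either introduces an $a_1$ or $b_{1,3}$ into the carrier, or breaks up the $b_{1,2}$-$a_2^n$-$a_3$ pattern, whereas $\bar{f}_n$ contains only $b_{1,2}$, $a_2$, $a_3$. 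Verifying this claim carefully is where the real work lies, but it reduces to a bounded case analysis of the composition rules in Section~\ref{Shi} applied to the three families \eqref{Lrel1}–\eqref{Lrel2.2}.
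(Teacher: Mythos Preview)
Your overall strategy is exactly the paper's: use the weighted deg-lex ordering with $d(a_i)=1$, $d(b_{i,j})=2$, exploit homogeneity to restrict to the finite set $S_m$ of relators of degree at most $n+3$, and show that no carrier $\bar{s}$ of the completed set is a subword of $\bar{f}_n$.

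Two points where your sketch needs adjustment to match what actually works. First, the letter ordering: the paper takes $b_{1,2} > b_{1,3} > b_{2,3} > a_1 > a_2 > a_3$, not the $a$'s-first ordering you suggest. This is not merely a matter of convenience. With the $b$'s on top, $[b_{1,2},a_2,\ldots,a_2,a_3]$ is already the regular bracketing of $b_{1,2}a_2^n a_3$ and dominates the second term of $f_n$ (which has no $b_{1,2}$), so $\bar{f}_n = b_{1,2}a_2^n a_3$. With your ordering the second term, which contains $a_1$, would dominate, and then your proposed invariant ``every consequence introduces an $a_1$ or $b_{1,3}$'' is useless because $\bar{f}_n$ itself contains $a_1$. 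Second, the invariant that survives completion is about the letter-multiset of \emph{every} monomial appearing in an element of $\widehat{S_m}$, not just the carrier: the paper checks that no monomial of any $s\in S_m$ has letters contained in $\{b_{1,2},a_2\}$ with a single $b_{1,2}$, nor in $\{a_2,a_3\}$ with a single $a_3$, and observes that any monomial arising in a composition or reduction contains the full letter-multiset of some monomial of one of the inputs, so the property propagates. This rules out proper subwords $b_{1,2}a_2^k$ and $a_2^k a_3$ of $\bar{f}_n$; the case $\bar{s}=\bar{f}_n$ itself needs a separate short argument (original relators have carriers of odd associative length, and bracketing constraints prevent later compositions from producing the right-normed pattern $b_{1,2}a_2^n a_3$), which your sketch does not anticipate.
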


\begin{proof}
Define $d: X \to \mb{N}$ by $d(a_i)=1$ and $d(b_{i,j})=2$. Then clearly the set $S$ is homogeneous with respect to the degree function extending $d$.  

Let $S_m$ be the set of elements in $S$ with degree at most $m$. Clearly it suffices to show that $f_n$ does not lie in the ideal generated by $S_m$ with $m = d(f_n) = n+3$, because it cannot be a consequence of relators of higher degree than itself.

Consider the weighted deg-lex ordering on the associative words with letters in $X$, where 
\[ b_{1,2} > b_{1,3} > b_{2,3} > a_1> a_2 > a_3\]
and the degrees are defined by the function $d$. 

The set $S_m$ is finite and homogeneous, so we are in the situation described in the end of Section \ref{Shi}. Let $\widehat{S_m}$ be the resulting reduced set with the property that any composition between two of its elements either lies in $\widehat{S_m}$ or has degree greater than $f_n$. This set inherits the following property from $S_m$: if 
$s \in \widehat{S_m}$, then no monomial involved in the expression of $s$ can have only a single ocurrence of $b_{1,2}$ and some occurrences of $a_2$ as letters. Indeed, if $g,h \in \widehat{S_m}$, then any monomial involved in a composition of $g$ and $h$ contains all the letters of some monomial involved in $g$ or $h$, and similarly with reduction. The fact that $S_m$ actually has such property to begin with can be verified directly by inspection of
\eqref{Lrel1}, \eqref{Lrel2.1} and \eqref{Lrel2.2}.

The same reasoning implies that no monomial involved in the expression of any $s \in \widehat{S_m}$ can have only $a_2$ and a single occurrence of $a_3$ as letters (that is, if all letters of the monomial are $a_2$ and $a_3$, then $a_3$ must appear at least twice).

Notice that $\bar{f}_n = b_{1,2} a_2^n a_3$, since the term $[b_{1,2},a_2, \ldots, a_2, a_3]$ is the unique regular bracketing of the regular associative word $b_{1,2} a_2^n a_3$ and the other term of $f_n$ does not involve $b_{1,2}$, which is the highest letter in the lexicographic ordering. Suppose that, for some $s \in \widehat{S_m}$, the associative word $\bar{s}$ is a subword $\bar{f_n} = b_{1,2} a_2^n a_3$. Clearly $\bar{s}$ cannot be of type $a_2^k$. It cannot be neither $\bar{s}=b_{1,2} a_2^k$ nor $\bar{s} = a_2^k a_3$ as well, by the previous paragraph. 

The last thing we must check is that there is no element $s \in \widehat{S_m}$ with $\bar{s}=\bar{f}_n = b_{1,2} a_2^n a_3$. If such an element existed, it would not be an element of $S_m$, because all elements $s_0 \in S_m$ result in a word $\bar{s}_0$ of odd length. Thus $s$ should be the result of some composition or some reduction. In any case in we conclude that there is some $g \in \widehat{S_m}$ of lower degree and a monomial $u$ that has non-zero coefficient in the expression of $g$ and such that all letters of $u$ are letters of $\bar{f}_n$, with at most the same number of occurrences.

First notice that $u$ must involve $a_3$, otherwise we would a have a monomial of some element of $\widehat{S_m}$ involving only $b_{1,2}$ and $a_2$, which we already argued that cannot happen. Similarly, it must involve $b_{1,2}$, otherwise $u$ would a be a monomial with letters $a_2$ and a single occurrence of $a_3$.

So $u$ involves \textit{all} letters of $\bar{f}_n$. The only possibilities for $\bar{u}$ to be regular are $\bar{u} = b_{1,2}a_2^i a_3 a_2^j$ for some $i,j$ with $i+j < n$.
The bracketing of such a word is of the form
\[ u = [ \ldots [[b_{1,2}, a_2, \ldots, a_2, a_3], a_2], \ldots, a_2]\]
where $a_2$ appears $i$ times to the left of $a_3$, and $j$ times to the right. It follows that the monomial corresponding to $\bar{f}_n$ in $s$ is obtained in the composition or reduction process by taking brackets of $u$ with $x_2$ on the right or on the left. In any case the resulting monomial is up to sign the regular bracketing of the word $b_{1,2}a_2^i a_3 a_2^k$ for some $k>j$, so $u=b_{1,2} a_2^n a_3$ can never be achieved. Thus $\bar{f}_n$ cannot actually be the associative carrier of a monomial involved in some composition or reduction of elements of $\widehat{S_m}$.

By Theorem \ref{thmShirshov} it follows that $f_n$ does not lie in the ideal generated by $S_m$, and consequently $f_n \notin \langle \langle S \rangle \rangle$. Thus $f_n$, as an element of $L(\mf{g})$, is non-trivial.
\end{proof}

\begin{theorem}  \label{6.2}
 If $\mf{f}$ is free non-abelian of rank at least $3$, then $R(\mf{g})$ is infinite-dimensional.
\end{theorem}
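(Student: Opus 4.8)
The plan is to combine Proposition~\ref{propRinfdim} with two additional ingredients: the linear independence of the family $\{f_n\}$, and a reduction from arbitrary rank $\geq 3$ to rank exactly $3$.

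First I would observe that $L(\mf{g})\simeq\mc{L}$ is graded by the degree function $d$ with $d(a_i)=1$, $d(b_{i,j})=2$. Indeed, $S$ is homogeneous with respect to $d$ (as recalled in the proof of Proposition~\ref{propRinfdim}), so the ideal it generates in the free Lie algebra on $X$ is graded and the quotient $\mc{L}$ inherits a grading. Since $f_n$ is homogeneous of degree $d(f_n)=n+3$, the elements $f_n$ for distinct even $n$ lie in distinct graded components of $L(\mf{g})$; as each is nonzero by Proposition~\ref{propRinfdim}, the family $\{f_n : n\ \text{even}\}$ is linearly independent. Since each $f_n$ lies in $R(\mf{g})$ (as noted before Proposition~\ref{propRinfdim}, by applying $\rho$ to \eqref{deffn}), this already proves the theorem when $\mf{g}$ is free of rank exactly $3$.

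Next I would deduce the general case by functoriality of $\chi(-)$. A Lie homomorphism $\phi\colon\mf{a}\to\mf{b}$ induces $\chi(\phi)\colon\chi(\mf{a})\to\chi(\mf{b})$ via $x\mapsto\phi(x)$, $x^{\psi}\mapsto\phi(x)^{\psi}$, which is well defined because it respects $[x,x^{\psi}]=0$, and is compatible with $\alpha$ and $\rho$. Consequently $\chi(\phi)$ carries the copy of $\mf{a}$ into that of $\mf{b}$, the copy of $\mf{a}^{\psi}$ into that of $\mf{b}^{\psi}$, $L(\mf{a})$ into $L(\mf{b})$, and hence $R(\mf{a})=[\mf{a},[L(\mf{a}),\mf{a}^{\psi}]]$ into $R(\mf{b})$. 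Now let $\mf{g}$ be free on $x_1,\dots,x_m$ with $m\geq 3$, and let $\mf{h}\subseteq\mf{g}$ be the free rank-$3$ subalgebra on $x_1,x_2,x_3$; the inclusion $\iota\colon\mf{h}\hookrightarrow\mf{g}$ and the retraction $\pi\colon\mf{g}\to\mf{h}$ with $\pi(x_i)=x_i$ for $i\leq 3$ and $\pi(x_i)=0$ otherwise satisfy $\pi\iota=\mathrm{id}_{\mf{h}}$. Applying $\chi$ yields $\chi(\pi)\chi(\iota)=\mathrm{id}$, so $\chi(\iota)$ is injective and restricts to an injection $R(\mf{h})\hookrightarrow R(\mf{g})$. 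Since $R(\mf{h})$ is infinite-dimensional by the rank-$3$ case, so is $R(\mf{g})$.

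The substantive content is entirely in Proposition~\ref{propRinfdim} — the Gröbner--Shirshov computation showing that $\bar{f}_n=b_{1,2}a_2^{n}a_3$ contains no $\bar{s}$ with $s$ in the completed reduced set $\widehat{S_m}$. The only points left to check, both routine from the definitions, are the homogeneity of $S$ (which legitimizes the grading and hence the linear independence of the $f_n$) and the fact that the functorial maps $\chi(\phi)$ respect the subquotients $L$ and $R$.
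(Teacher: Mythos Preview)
Your proof is correct and follows the same overall strategy as the paper: invoke Proposition~\ref{propRinfdim}, observe that the $f_n$ lie in distinct homogeneous components of $L(\mf{g})$ (hence are linearly independent), and then reduce the general case to rank~$3$ by functoriality of $\chi(-)$. The only difference is in the direction of the reduction: the paper takes an epimorphism $\phi\colon \mf{f}\twoheadrightarrow \mf{g}$ onto a free Lie algebra of rank~$3$ and uses that $\phi_\ast(R(\mf{f}))=R(\mf{g})$ is infinite-dimensional, whereas you embed a rank-$3$ free factor $\mf{h}\hookrightarrow \mf{g}$ and use the retraction to see that $\chi(\iota)$ is a split monomorphism, hence injects $R(\mf{h})$ into $R(\mf{g})$. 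Both arguments are equally short and routine; yours has the mild advantage that you never need to check that $R$ is preserved \emph{surjectively} under $\chi(\phi)$, only that it is preserved as a subspace, which is immediate from the definition $R=[\mf{g},[L,\mf{g}^\psi]]$.
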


\begin{proof}
 For a free Lie algebra $\mf{g}$ of rank $3$, Proposition \ref{propRinfdim} says that none of the $f_n$ defined in \eqref{deffn} are trivial. Furthermore, they are all of different degree with respect to the function $d$ defined in the proof of the proposition, so they make up an infinite linearly independent set inside $R(\mf{g})$. In general, if $\mf{f}$ is free of rank more than $3$, then there is an epimorphism  $\phi: \mf{f} \to \mf{g}$ and the induced homomorphism $\phi_{\ast}: \chi(\mf{f}) \to \chi(\mf{g})$ satisfies $\phi_{\ast}(R(\mf{f})) = R(\mf{g})$, thus $R(\mf{f})$ is of infinite dimension as well.
\end{proof}

\begin{crlr}
If $\mf{g}$ is free non-abelian of rank at least $3$, then $\chi(\mf{g})$ is of infinite cohomological dimension.
\end{crlr}

\begin{proof}
 This is clear, since $\chi(\mf{g})$ contains an abelian subalgebra of infinite dimension. 
\end{proof}

\begin{rem}
 It is clear by the proofs that Theorem \ref{6.2} and its corollary hold if we assume only that the free Lie algebra of rank $3$ is a quotient of $\mf{g}$.
\end{rem}

\end{section}

\section*{Acknowledgements}
The author is supported by grant 2015/22064-6 from S\~{a}o Paulo Research Foundation (FAPESP).


\begin{thebibliography}{99}
 \bibitem{BLN} R. Bastos, B. Lima, R. Nunes, \emph{Finiteness conditions for the weak commutativity construction}, arXiv preprint arXiv:1907.00508 (2019).

 \bibitem{Bokut} L. A. Bokut, \emph{Insolvability of the word problem for Lie algebras, and subalgebras of finitely presented Lie algebras}, Izvestija AN USSR(mathem.)36(1972) 1173-1219
 
 \bibitem{BKbook} L. A. Bokut, G. P. Kukin, \emph{Algorithmic and combinatorial algebra}. Mathematics
and its Applications, 255. Kluwer Academic Publishers Group, Dordrecht, 1994. 
 
 \bibitem{Bou} N. Bourbaki, \emph{Groupes et Algèbres de Lie}, Chapitres 2 et 3, Actualités Scientifiques et Industrielles, No. 1349. Hermann, Paris,1972. 320 pp.

 
 \bibitem{BriKoc} M. R. Bridson, D. H. Kochloukova, \emph{Weak commutativity and finiteness properties of groups}, Bull. London Math. Soc., 51: 168-180 (2019).

 \bibitem{Ell1} G. J. Ellis, \emph{Nonabelian exterior products of Lie algebras and an exact sequence in the homology of Lie algebras}. J. Pure Appl. Algebra 46 (1987), no. 2-3, 111–115. 
 
 \bibitem{Ell2} G. J. Ellis, \emph{A nonabelian tensor product of Lie algebras}. Glasgow Math. J. 33 (1991), no. 1, 101–120.
 
 \bibitem{GAP}  The GAP~Group, \emph{GAP -- Groups, Algorithms, and Programming,  Version 4.8.10};  2018,
  \verb+(https://www.gap-system.org)+.

 \bibitem{GupRocSid} N. Gupta, N. Rocco, S. Sidki, \emph{Diagonal embeddings of nilpotent groups}, Illinois J. Math. 30 (1986), no. 2, 274 - 283.
  
 \bibitem{KocSid} D. H. Kochloukova, S. Sidki, \emph{On weak commutativity in groups}. J. Algebra 471 (2017), 319–347.
 
 \bibitem{LimOli} B. C. R. Lima, R. N. Oliveira, \emph{Weak commutativity between two isomorphic polycyclic groups}. J. Group Theory 19 (2016), no. 2, 239–248. 

  \bibitem{Men} L. A. de Mendonça, \emph{The weak commutativity construction for Lie algebras}. J. Algebra 529 (2019), 145–173.

 
 \bibitem{Roc} N. Rocco, \emph{On weak commutativity between finite $p$-groups, $p$ odd}, J. Algebra 76 (1982), no. 2, 471 - 488. 
 
 \bibitem{Shirshov1} A. I. Shirshov, \emph{On free Lie rings}. Mat. Sb. 45(2), 113–122 (1958)
 
 \bibitem{Shirshov2} A. I. Shirshov, \emph{Some algorithmic problems for Lie algebras}, Sibirsk. Mat. Zh.3(1962)292-296 (in Russian). English translation:SIGSAM Bull.33(1999) 3-6.
 
 \bibitem{Sid} S. Sidki, \emph{On weak permutability between groups.} J. Algebra 63 (1980), no. 1, 186–225.

 \bibitem{Wei} C. A. Weibel, \emph{An introduction to homological algebra}, Cambridge Studies in Advanced Mathematics, vol. 38. Cambridge University Press, Cambridge (1994).
\end{thebibliography}
\end{document}